\newtheorem{theorem}{Theorem}
\newtheorem{definition}{Definition}
\newtheorem{lemma}{Lemma}
\newtheorem{assumption}{Assumption}
\newcommand{\hL}{\mathcal{L}}
\newcommand{\hN}{\mathcal{N}}
\newcommand{\cL}{{\cal{L}}}
\newcommand{\cN}{{\cal N}}
\newcommand{\cT}{{\cal T}}
\newcommand{\cE}{{\cal E}}
\newcommand{\cK}{{\cal K}}
\newcommand{\cU}{{\cal U}}
\newcommand{\cY}{{\cal Y}}
\begin{document}
\title{\LARGE Accelerated Voltage Regulation in Multi-Phase Distribution Networks\\Based on Hierarchical Distributed Algorithm}
\author{Xinyang Zhou, Zhiyuan Liu, Changhong Zhao, Lijun Chen
\thanks{X. Zhou is with the National Renewable Energy Laboratory, Golden, CO, 80401, USA (Emails: xinyang.zhou@nrel.gov). }
\thanks{Z. Liu and L. Chen are with the College of Engineering and Applied Science, University of Colorado, Boulder, CO 80309, USA (Emails: \{zhiyuan.liu, lijun.chen\}@colorado.edu).}
\thanks{C. Zhao is with the Department of Information Engineering, the Chinese University of Hong Kong, HK (Emails: zhchangh1987@gmail.com).}
\thanks{Preliminary results of this paper have been accepted to American Control Conference, Philadelphia, 2019 \cite{zhou2018hierarchical}.}}
\maketitle

\begin{abstract}
We propose a hierarchical distributed algorithm to solve optimal power flow (OPF) problems that aim at dispatching controllable distributed energy resources (DERs) for voltage regulation at minimum cost. The proposed algorithm features unprecedented scalability to large multi-phase distribution networks by
jointly exploring the tree/subtrees structure of a large radial distribution network and the structure of the linearized distribution power flow (LinDistFlow) model to derive a hierarchical, distributed implementation of the primal-dual gradient algorithm that solves OPF. The proposed implementation significantly reduces the computation loads compared to the centrally coordinated implementation of the same primal-dual algorithm without compromising optimality. 
Numerical results on a 4,521-node test feeder show that the designed algorithm achieves more than 10-fold acceleration in the speed of convergence compared to the centrally coordinated primal-dual algorithm through reducing and distributing computational loads.
\end{abstract}

\section{Introduction}


OPF problems determine the best operating points of dispatchable devices in electric power grids and achieve the optimal system-wide objectives and operational constraints for important applications such as demand response and voltage regulation. However, the increasing penetrations of DERs such as roof-top photovoltaic, electric vehicles, battery energy storage systems, thermostatically controlled loads, and other controllable loads not only provide enormous potential optimization and control flexibility that we can explore \cite{dall2017unlocking}, but also make OPF more challenging to solve with significantly growing dimensionality.
Meanwhile, due to the intermittent nature of the renewable energy resources, their deepening penetration in the distribution networks causes large and rapid fluctuations in power injections and voltages, and calls for fast control paradigms.

Distributed algorithms are developed to facilitate scalable and fast control of large networks of dispatchable DERs by parallel and distributed computation. In literature, such algorithms have been designed and implemented either with a central coordinator (CC), e.g., \cite{dall2018optimala, hauswirth2016projected, tang2017real, mohammadi2018agent, zhou2017incentive, zhou2017discrete}, or among neighboring agents without a CC, e.g., \cite{peng2018distributed, magnusson2017voltage, vsulc2014optimal, wu2018smart,gan2016online}. The latter usually demands that all nodes in the network compute and pass along updated information to their neighbors, which may not be implementable in practice if not all nodes are controllable or equipped with computation and communication capability. This paper will focus on the former communication model, which only requires CC to communicate with controllable nodes.
  However, what has been overlooked by existing literature on centrally coordinated distributed algorithms is that, while part of the computation loads are distributed among control nodes, the remaining that are executed by CC may surge as the size of the problem increases. Take voltage regulation problems (to be elaborated later) as an example: let $N$ denote the number of networked nodes, and the computational complexity of CC is in the order of $N^2$. When $N$ grows, the computational time of CC becomes exceedingly notable, rendering CC the bottleneck and hindering fast algorithm implementation in large systems. 

One way to mitigate the computational loads of CC (and therefore accelerate the algorithms) is to consider multiple coordinators among which CC's loads can be distributed. However, due to the complexity of the coupling term calculated by CC, it is usually challenging to decompose it in an efficient and yet structurally meaningful way.  To our best knowledge, little work has been done to explore this area. In particular, hierarchical structures have been under-exploited in developing distributed OPF algorithms.  

Moreover, realistic distribution systems are usually featured with multi-phase unbalanced loads.  The resultant multi-phase OPF has been studied extensively in literature, e.g., \cite{kekatos2016voltage, robbins2016optimal, dall2013distributed, gan2014convex, gan2016online, zhao2017convex}. Indeed, the inter-phase coupling in multi-phase systems adds extra difficulty to the already complex OPF problems and makes computational load reduction and distribution even more challenging. 

This work considers a large multi-phase distribution network with tree topology. We consider the linearized distribution flow (LinDistFlow) model \cite{baran1989optimala, baran1989optimalb,baran1989network, zhou2018reverse,liu2018signal} as well as its multi-phase extension \cite{gan2016online,gan2014convex} for  the distribution networks.
The network is divided into areas featuring subtree topology. A regional coordinator (RC) communicates with all the dispatchable nodes within each subtree, and CC communicates with all RCs. Each RC knows only the topology and line parameters of the subtree that it coordinates, and CC knows only the topology and line parameters  of the reduced network which treats each subtree as a node. Given such information availability, we explore the topological structure of the LinDistFlow model to derive a hierarchical, distributed implementation of the primal-dual gradient algorithm that solves an OPF problem.  
The OPF problem minimizes the total cost over all the controllable DERs and a cost associated with the total network load subject to voltage regulation constraints. The proposed implementation significantly reduces the computation burden of CC compared to the centrally coordinated implementation of the primal-dual algorithm, not only by distributing computational loads among RCs but also by reducing repetitive calculations and information transfers. Convergence of the designed algorithm is guaranteed with unbalanced nonlinear power flow.

Performance of the proposed implementation is verified through numerical simulation of a three-phase unbalanced 4,521-node test feeder with 1,043 controllable nodes. Simulation results show that a 10-fold acceleration in the speed of convergence can be achieved by the hierarchical distributed method compared to the centrally coordinated implementation. This significant improvement in convergence speed makes real-time grid optimization and control possible. Meanwhile, to our best knowledge, the size of the network in our simulation is the largest in distributed OPF studies.

It is worth noting that the model and algorithm design in this work can be readily applied to optimization and control of networked microgrids with each subtree seen as a microgrid. Meanwhile, most existing works on optimization and control of networked microgrids either over-simplify each individual microgrid as a node without considering power flow  within it \cite{wang2015coordinated,wang2016networked,fathi2013adaptive}, or ignore power flow models among microgrids \cite{utkarsh2018distributed,shi2015distributed,wang2016incentivizing}. Recent work \cite{zhang2018dynamic} applies a game-theoretic approach to manage a partitioned distribution network based on a noncooperative Nash game, where uniqueness of equilibrium, convergence, and global performance are difficult to characterize. Different from those in the literature, our method models inter- and intra-microgrid power flow dynamics and features provable convergence and global optimality performance.

Also note that, another line of works on voltage regulation are based on local control algorithms, e.g., \cite{zhu2015fast, singhal2018real, zhou2018reverse, liu2018signal} and references therein. Though local voltage control has the advantages of low computation and communication complexity, it can only solve a limited class of problems and cannot address more general objective functions and constraints as formulated in this work.

The rest of the paper is organized as follows. To provide design intuition, we start with Section~\ref{sec:model} to model the single-phase distribution system and formulate the OPF problem, and follow with Section~\ref{sec:hier} to propose a hierarchical distributed implementation of the primal-dual gradient algorithm. We then in Section~\ref{sec:multiphase} introduce the multi-phase distribution system and its OPF for which we elaborate a hierarchical distributed algorithm and characterize its convergence with nonlinear power flow. Section~\ref{sec:numerical} presents numerical results, and Section~\ref{sec:conclusion} concludes this paper.

\section{Solving OPF in Single-Phase System}\label{sec:model}
\subsection{Single-Phase Power Flow Model}
Consider a radial single-phase power distribution network denoted by $\cT=\{\cN \cup \{0\},\cE\}$ with $N+1$ nodes collected in the set $\cN \cup \{0\}$ where $\cN:=\{1, ..., N\}$ and node $0$ is the slack bus, and distribution lines collected in the set $\cE$. For each node $i\in \hN$, denote by $\cE_i \subseteq \cE$ the set of lines on the unique path from node $0$ to node $i$, and let $p_i$ and $q_i$ denote the real and reactive power injected, where negative (resp. positive) power injection means power consumption (resp. generation). Let $v_i$ be the squared magnitude of the complex voltage (phasor) at node $i$. For each line $(i, j)\in \cE$, denote by $r_{ij}$ and $x_{ij}$ its resistance and reactance, and  $P_{ij}$ and $Q_{ij}$ the real and reactive power from node $i$ to node $j$. Let $\ell_{ij}$ denote the squared magnitude of the complex branch current (phasor) from node $i$ to $j$. 

We adopt the following DistFlow model \cite{baran1989optimala, baran1989optimalb} for the radial distribution network:
\begin{subequations}\label{eq:bfm}
	\begin{eqnarray}
		\hspace{-5mm} P_{ij} \hspace{-2mm} &=&\hspace{-2mm} - p_j +\hspace{-2mm}\sum_{k: (j,k)\in \cE} \hspace{-2mm} P_{jk}+  r_{ij}  \ell_{ij}  \label{p_balance}, \\[-3pt]
		\hspace{-5mm}Q_{ij} \hspace{-2mm}&=&\hspace{-2mm}  -q_j + \hspace{-2mm}\sum_{k: (j,k)\in \cE} \hspace{-2mm} Q_{jk} + x_{ij} \ell_{ij} \label{q_balance},
	\end{eqnarray}
	\begin{eqnarray}
		\hspace{-5mm}v_j \hspace{-2mm}&=&\hspace{-2mm}  v_i - 2 \big(r_{ij} P_{ij} + x_{ij} Q_{ij} \big) + \big(r_{ij}^2+x_{ij}^2\big) \ell_{ij} \label{v_drop},\\[1pt]
		\hspace{-5mm}\ell_{ij}v_i \hspace{-2mm}&=&\hspace{-2mm}   P_{ij}^2 + Q_{ij}^2  \label{currents}.
	\end{eqnarray}
\end{subequations}

Following \cite{baran1989network, zhou2018reverse} we assume that the active and reactive power loss $r_{ij} \ell_{ij}$ and $ x_{ij} \ell_{ij}$, as well as $r^2_{ij} \ell_{ij}$ and $x^2_{ij} \ell_{ij}$, are negligible and can thus be ignored.   
Indeed, the losses are much smaller than power flows $P_{ij}$ and $Q_{ij}$, typically on the order of $1\%$. 
With the above approximations \eqref{eq:bfm} is simplified to the following linear model:
\begin{eqnarray}
\bm{v}&=&R\bm{p}+X\bm{q}+\tilde{\bm{v}},\label{eq:lindistflow}
\end{eqnarray}
where bold symbols $\bm{v}=[v_1,\ldots,v_N]^{\top}$, $\bm{p}=[p_1,\ldots,p_N]^{\top}$, $\bm{q}=[q_1,\ldots,q_N]^{\top}\in\mathbb{R}^N$ represent vectors, $\tilde{\bm{v}}$ is a constant vector with every component being the squared voltage magnitude at the slack bus, and the sensitivity matrices $R,X\in\mathbb{R}_+^{N\times N}$ respectively contain elements of
\begin{eqnarray}
	R_{ij}:= \!\!\! \sum_{(\zeta,\xi)\in \cE_i \cap \cE_j}\!\!\!\!2\cdot  r_{\zeta\xi}, \ \ \ \ X_{ij}:=\!\!\!\! \sum_{(\zeta,\xi)\in \cE_i \cap \cE_j}\!\!\!\! 2\cdot x_{\zeta\xi}  \label{X_def}.
\end{eqnarray}
Here, the voltage-to-power-injection sensitivity factors $R_{ij}$ (resp. $X_{ij}$) represents \textit{the resistance (resp. reactance) of the common path of node $i$ and $j$ leading back to node 0}. Keep in mind that this result serves as the basis for designing the hierarchical distributed algorithm to be introduced later. Fig.~\ref{fig:RX} (left) illustrates the common path $\cE_i \cap \cE_j$ for two arbitrary nodes $i$ and $j$ in a radial network and their $R_{ij},X_{ij}$.
\begin{figure}[h]
	\centering
	\includegraphics[width=.46\textwidth]{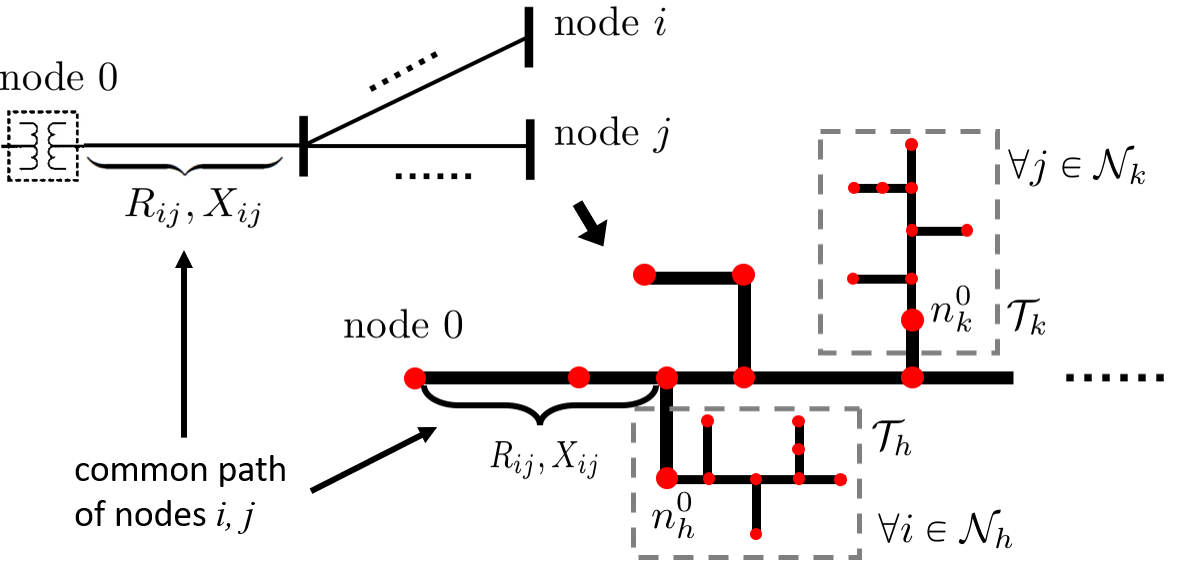}
	\caption{(Left) $\cE_i \cap \cE_j$ for  two arbitrary nodes $i, j$ in the network and their mutual voltage-to-power-injection sensitivity factors $R_{ij}, X_{ij}$. (Right) Unclustered nodes and root nodes of subtrees along with their connecting lines constitute the reduced network. Two subtrees $\cT_h$ and $\cT_k$ share the same common $R_{ij}$ and $X_{ij}$ for any of their respective nodes $i$ and $j$.}
	\label{fig:RX}
\end{figure}

\subsection{OPF and Primal-Dual Gradient Algorithm}
Assume node $i\in\cN$ has a dispatchable DER (or aggregation of DERs) whose real and reactive power injections are confined by $(p_i,q_i)\in\cY_i$ where $\cY_i$ is a convex and compact set. 
For example, the feasible region of a  PV inverter can be modeled as:
	\begin{align} 
	\cY_i =  \left\{ (p_i,q_i) \big|  0 \leq p_{i}  \leq  p_{i}^{\text{av}},~p_{i}^{2} + q_{i}^{2} \leq  \eta_{i}^2 \right\},\nonumber
	\end{align}
	where $p_{i}^{\text{av}}$ denotes the available active power from a PV system (based on prevailing ambient conditions), and  $\eta_{i}$ is the rated apparent capacity. The feasible set for an energy storage system can be modeled as 
	\begin{align} 
	\cY_i =  \left\{ (p_i,q_i) \big|  \underline{p}_i \leq p_{i}  \leq  \overline{p}_{i},~p_{i}^{2} + q_{i}^{2} \leq  \eta_{i}^2 \right\},\nonumber
	\end{align}
	for given limits $\underline{p}_i,\overline{p}_i$
	and for a given inverter capacity rating $\eta_i$. The limits $\underline{p}_i,\overline{p}_i$
	are updated during the operation of the
	battery based on the state of charge. The operating region of small-scale diesel generators can	be modeled using constant box constraints
	\begin{align} 
		\cY_i =  \left\{ (p_i,q_i) \big|  \underline{p}_i \leq p_{i}  \leq  \overline{p}_{i},\underline{q}_i \leq q_{i}  \leq  \overline{q}_{i}\right\}.\nonumber
	\end{align}

Let $P_0$ denote active power injected  into the feeder at node 0, which is linearly approximated by the total active power loads as
\begin{eqnarray}
 P_0=-P_I-\sum_{i\in\cN}p_i,\label{eq:P0}
\end{eqnarray}
where $P_I$ denotes the total uncontrollable (inelastic) power injection in the network. 
Use $\bm{v}(\bm{p},\bm{q})$ and $P_0(\bm{p})$ to represent \eqref{eq:lindistflow} and \eqref{eq:P0}, respectively, and consider the following OPF problem:
\begin{subequations}\label{eq:opt}
\begin{eqnarray}
&\underset{\bm{p},\bm{q}}{\min} & \sum_{i\in\cN}C_i(p_i,q_i)+C_0(P_0(\bm{p})),\\[-4pt]
& \text{s.t.}&
\underline{\bm{v}}\leq \bm{v}(\bm{p},\bm{q}) \leq \overline{\bm{v}}, \label{eq:voltreg}\\
&     & (p_i,q_i)\in\cY_i,\forall i\in\cN,\label{eq:X}
\end{eqnarray}
\end{subequations}
where the objective $C_i(p_i,q_i)$ is the cost function for node $i$ and the coupling term $C_0(P_0(\bm{p}))$ represents the cost associated with the total network load. For example, $C_0(P_0(\bm{p}))=\alpha (P_0(\bm{p})-\tilde{P}_0)^2$ penalizes $P_0(\bm{p})$'s deviation from a dispatching signal $\tilde{P}_0$ from the bulk system operator, where $\alpha>0$ is a given weighting factor. We make the following assumption for these cost functions.
\begin{assumption}\label{ass:costfun}
$C_i(p_i,q_i),\: \forall i\in\cN$ are continuously differentiable and strongly convex in $(p_i,q_i)$, with bounded first-order derivative in $\cY_i$; $C_0(P_0)$ is continuously differentiable and convex with bounded first-order derivative.
\end{assumption}

Associate dual variables $\underline{\bm{\mu}}$ and $\overline{\bm{\mu}}$ with the left-hand-side and the right-hand-side of \eqref{eq:voltreg}, respectively, to write the Lagrangian of \eqref{eq:opt} as:
\begin{eqnarray}
    \hspace{-10mm}&&\cL(\bm{p},\bm{q};\overline{\bm{\mu}},\underline{\bm{\mu}})=\sum_{i\in\cN}C_i(p_i,q_i)+C_0(P_0(\bm{p}))\nonumber\\[-3pt]
    \hspace{-10mm}&&\hspace{5mm}+\underline{\bm{\mu}}^{\top}(\underline{\bm{v}}-\bm{v}(\bm{p},\bm{q}))+\overline{\bm{\mu}}^{\top}(\bm{v}(\bm{p},\bm{q})-\overline{\bm{v}}),\label{eq:lang}
\end{eqnarray}
with 
\eqref{eq:X} treated as the domain of $(\bm{p},\bm{q})$.

In order to design an algorithm with provable convergence, we introduce the following regularized Lagrangian with parameter $\eta>0$ and $\bm{\mu} :=[\underline{\bm{\mu}}^{\top}, \overline{\bm{\mu}}^{\top}]^{\top}$:
\begin{eqnarray}
     \hspace{-10mm}&&\cL_{\eta}(\bm{p},\bm{q};\overline{\bm{\mu}},\underline{\bm{\mu}})=\sum_{i\in\cN}C_i(p_i,q_i)+C_0(P_0(\bm{p}))\nonumber\\[-3pt]
     \hspace{-10mm}&&\hspace{5mm}+\underline{\bm{\mu}}^{\top}(\underline{\bm{v}}-\bm{v}(\bm{p},\bm{q}))+\overline{\bm{\mu}}^{\top}(\bm{v}(\bm{p},\bm{q})-\overline{\bm{v}})-\frac{\eta}{2}\|\bm{\mu}\|^2_2.\label{eq:langr}
\end{eqnarray}

 Since $\cL_{\eta}(\bm{p},\bm{q};\underline{\bm{\mu}},\overline{\bm{\mu}})$ is strongly convex in $\bm{p},\bm{q}$ and strongly concave in $\underline{\bm{\mu}},\overline{\bm{\mu}}$, the next result follows.
\begin{theorem}\label{the:unique}
There exists one unique saddle point $(\bm{p}^*,\bm{q}^*;\underline{\bm{\mu}}^*,\overline{\bm{\mu}}^*)$ of $\cL_{\eta}$.
\end{theorem}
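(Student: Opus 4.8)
The plan is to exploit the convex--concave structure of $\cL_\eta$ together with the regularization, reducing existence and uniqueness to two nested optimization problems: an inner strongly convex minimization over the primal variables on a compact domain, and an outer strongly concave maximization over the dual variables on the (unbounded) nonnegative orthant.

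First I would record the structural facts. By \eqref{eq:lindistflow}, $\bm{v}(\bm{p},\bm{q})=R\bm{p}+X\bm{q}+\tilde{\bm{v}}$ is affine in $(\bm{p},\bm{q})$, and by \eqref{eq:P0}, $P_0(\bm{p})$ is affine in $\bm{p}$; hence the coupling terms $\underline{\bm{\mu}}^{\top}(\underline{\bm{v}}-\bm{v})+\overline{\bm{\mu}}^{\top}(\bm{v}-\overline{\bm{v}})$ are bilinear in $((\bm{p},\bm{q}),\bm{\mu})$. Under Assumption~\ref{ass:costfun}, $\sum_{i\in\cN}C_i$ is strongly convex and $C_0\circ P_0$ is convex, so for every fixed $\bm{\mu}\geq 0$ the map $(\bm{p},\bm{q})\mapsto\cL_\eta$ is strongly convex; for every fixed $(\bm{p},\bm{q})$ the map $\bm{\mu}\mapsto\cL_\eta$ is strongly concave owing to the $-\frac{\eta}{2}\|\bm{\mu}\|^2$ term.

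Next I would define the dual function $d(\bm{\mu}):=\min_{(\bm{p},\bm{q})\in\cY}\cL_\eta(\bm{p},\bm{q};\bm{\mu})$, where $\cY:=\prod_{i\in\cN}\cY_i$ is convex and compact. Strong convexity of $\cL_\eta$ in the primal variables together with compactness of $\cY$ guarantee that the minimizer $(\bm{p}(\bm{\mu}),\bm{q}(\bm{\mu}))$ exists and is unique for each $\bm{\mu}$, so $d$ is well defined. Since the quadratic term separates out, $d(\bm{\mu})=-\frac{\eta}{2}\|\bm{\mu}\|^2+\min_{(\bm{p},\bm{q})\in\cY}[\,\cdots\,]$, where the bracket is a pointwise minimum of functions that are affine in $\bm{\mu}$, hence concave; thus $d$ is strongly concave. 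Coercivity then follows, because the affine-in-$\bm{\mu}$ part is bounded on the compact set $\cY$, so the $-\frac{\eta}{2}\|\bm{\mu}\|^2$ term forces $d(\bm{\mu})\to-\infty$ as $\|\bm{\mu}\|\to\infty$. A strongly concave, coercive function on the closed convex set $\{\bm{\mu}\geq 0\}$ attains its maximum at a unique point $\bm{\mu}^*$.

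Finally I would assemble the saddle point. Setting $(\bm{p}^*,\bm{q}^*):=(\bm{p}(\bm{\mu}^*),\bm{q}(\bm{\mu}^*))$, the right saddle inequality holds by definition of $d$, while the left one follows from Danskin's theorem: the subgradient of $d$ at $\bm{\mu}^*$ equals $\nabla_{\bm{\mu}}\cL_\eta$ evaluated at the unique minimizer, so optimality of $\bm{\mu}^*$ for $d$ over the orthant is equivalent to optimality of $\bm{\mu}^*$ for the concave map $\cL_\eta(\bm{p}^*,\bm{q}^*;\cdot)$. For uniqueness I would invoke that the saddle-point set of a convex--concave function is a product $\cP^*\times\cM^*$ of the primal-minimizer and dual-maximizer sets; strong convexity in $(\bm{p},\bm{q})$ collapses $\cP^*$ to a single point and strong concavity in $\bm{\mu}$ collapses $\cM^*$ to a single point, yielding the unique saddle point. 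I expect the only delicate step to be the coercivity and existence of $\bm{\mu}^*$ on the unbounded dual domain; this is precisely what the regularizer $-\frac{\eta}{2}\|\bm{\mu}\|^2$ is introduced to secure, so once strong concavity of $d$ is established the remainder is routine.
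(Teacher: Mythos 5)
Your proof is correct and rests on exactly the observation the paper itself uses --- strong convexity of $\cL_\eta$ in $(\bm{p},\bm{q})$ and strong concavity in $\bm{\mu}$ induced by the $-\frac{\eta}{2}\|\bm{\mu}\|_2^2$ regularizer --- except that the paper simply asserts the theorem from this fact with no further argument. Your dual-function/Danskin elaboration correctly supplies the only nontrivial detail, namely existence and uniqueness of the dual maximizer on the unbounded orthant; note only that coercivity is automatic for any strongly concave function, so your separate boundedness argument for the affine-in-$\bm{\mu}$ part, while sound, is not strictly needed.
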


Furthermore, the discrepancy due to the regularization term can be bounded and is proportional to $\eta$. We refer the details to \cite{koshal2011multiuser,zhou2017incentive}. For the rest of this section, we will focus on solving the saddle point of the regularized Lagrangian \eqref{eq:langr}.
Specifically, we cast the iterative projected primal-dual gradient algorithm to find the saddle point of \eqref{eq:langr} as follows:
\begin{subequations}\label{eq:primaldual}
\begin{eqnarray}
    \hspace{-2mm}\bm{p}(t+1)\hspace{-3mm}&=&\hspace{-3mm}\Big[\bm{p}(t)-\epsilon\Big(\!\nabla_{\bm{p}} C(\bm{p}(t),\!\bm{q}(t))\!-\!C'_0(P_0(t))\!\cdot\! \bm{1}_N\nonumber\\[-4pt]
    \hspace{-2mm}&&\hspace{22mm}+R^{\top}(\overline{\bm{\mu}}(t)-\underline{\bm{\mu}}(t))\Big)\Big]_{\bm{\cY}},\label{eq:primalP}\\[-4pt]
    \hspace{-2mm}\bm{q}(t+1)\hspace{-3mm}&=&\hspace{-3mm}\Big[\bm{q}(t)-\epsilon\Big(\!\nabla_{\bm{q}} C(\bm{p}(t),\bm{q}(t)) \nonumber\\[-4pt]
    \hspace{-2mm}&&\hspace{22mm}+X^{\top}(\overline{\bm{\mu}}(t)-\underline{\bm{\mu}}(t))\Big)\Big]_{\bm{\cY}},\label{eq:primalQ}\\[-4pt]
    \hspace{-2mm}\underline{\bm{\mu}}(t+1)\hspace{-3mm}&=&\hspace{-3mm}\big[\underline{\bm{\mu}}(t)+\epsilon (\underline{\bm{v}}-\bm{v}(t)-\eta   \underline{\bm{\mu}}(t))\big]_+,\label{eq:dual1}\\
    \hspace{-2mm}\overline{\bm{\mu}}(t+1)\hspace{-3mm}&=&\hspace{-3mm}\big[\overline{\bm{\mu}}(t)+\epsilon (\bm{v}(t)-\overline{\bm{v}}-  \eta\overline{\bm{\mu}}(t))\big]_+,\label{eq:dual2}\\
     \hspace{-2mm} \bm{v}(t+1)  \hspace{-3mm}&=&  \hspace{-3mm}R\bm{p}(t+1)+X\bm{q}(t+1)+\tilde{\bm{v}},\\
       \hspace{-2mm} P_0(t+1) \hspace{-3mm}&=&\hspace{-3mm}-P_I-\sum_{i\in\cN}p_i(t+1),
\end{eqnarray}
\end{subequations}
where $\epsilon>0$ is a constant stepsize to be determined, $\bm{1}_N=[1,\ldots,1]^{\top}\in\mathbb{R}^N$, $[\  ]_{\bm{\cY}}$ is the projection operator onto feasible sets $\bm{\cY}:=\bigtimes_{i\in\cN}\cY_i$, and $[ \ ]_{+}$ is the projection operator onto the positive orthant.

\subsection{Convergence Analysis}\label{sec:conv}
For uncluttered notation, we use $\bm{y}:=[\bm{p}^{\top},\bm{q}^{\top}]^{\top}$ to stack the primal variables and equivalently rewrite \eqref{eq:primaldual}  as follows:
\begin{eqnarray}
\begin{bmatrix}\bm{y}(t+1)\\ \bm{\mu}(t+1)\end{bmatrix}=\left[\begin{bmatrix}\bm{y}(t)\\ \bm{\mu}(t)\end{bmatrix}-\epsilon\begin{bmatrix}\nabla_{\bm{y}} \hL_{\eta}(\bm{y}(t), \bm{\mu}(t)) \\ -\nabla_{\bm{\mu}}\hL_{\eta}(\bm{y}(t),\bm{\mu}(t))
\end{bmatrix} \right]_{\bm{\cY}\times \bm{\cU}}\hspace{-4mm},\label{eq:mapk}
\end{eqnarray}
where $\bm{\cU}$ denotes the feasible positive orthant for the dual variables.
We further let $\bm{z}:=[\bm{y}^{\top},\bm{\mu}^{\top}]^{\top}$ stack all variables, and define the gradient operator $T(\bm{z}):=\begin{bmatrix}\nabla_{\bm{y}} \hL_{\eta}(\bm{y},\bm{\mu}) \\ -\nabla_{\bm{\mu}}\hL_{\eta}(\bm{y},\bm{\mu})\end{bmatrix}$.

\begin{lemma}\label{lem:mono}
$T(\bm{z})$ is a strongly monotone operator.
\end{lemma}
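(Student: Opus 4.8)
The plan is to verify strong monotonicity directly from the definition, i.e., to exhibit a constant $m>0$ with $(T(\bm{z}_1)-T(\bm{z}_2))^{\top}(\bm{z}_1-\bm{z}_2)\geq m\|\bm{z}_1-\bm{z}_2\|_2^2$ for all $\bm{z}_1,\bm{z}_2$ in the domain. First I would split $\bm{z}=[\bm{y}^{\top},\bm{\mu}^{\top}]^{\top}$ and expand the inner product, using the two components of $T$, into a primal block $(\nabla_{\bm{y}}\hL_{\eta}(\bm{z}_1)-\nabla_{\bm{y}}\hL_{\eta}(\bm{z}_2))^{\top}(\bm{y}_1-\bm{y}_2)$ and a dual block $-(\nabla_{\bm{\mu}}\hL_{\eta}(\bm{z}_1)-\nabla_{\bm{\mu}}\hL_{\eta}(\bm{z}_2))^{\top}(\bm{\mu}_1-\bm{\mu}_2)$.

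The decisive observation is that the only coupling between $\bm{y}$ and $\bm{\mu}$ in $\hL_{\eta}$ is bilinear. Since $\bm{v}(\bm{y})=R\bm{p}+X\bm{q}+\tilde{\bm{v}}=[R\ \ X]\bm{y}+\tilde{\bm{v}}$ is affine in $\bm{y}$, the two dual terms combine into $(\overline{\bm{\mu}}-\underline{\bm{\mu}})^{\top}[R\ \ X]\bm{y}$ plus terms constant in $\bm{y}$; writing $\overline{\bm{\mu}}-\underline{\bm{\mu}}=[-I\ \ I]\bm{\mu}$, this is exactly $\bm{y}^{\top}B\bm{\mu}$ with $B:=[R\ \ X]^{\top}[-I\ \ I]$. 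Consequently the coupling contributes $B\bm{\mu}$ to $\nabla_{\bm{y}}\hL_{\eta}$ (which is precisely the $R^{\top}(\overline{\bm{\mu}}-\underline{\bm{\mu}})$ and $X^{\top}(\overline{\bm{\mu}}-\underline{\bm{\mu}})$ terms appearing in \eqref{eq:primalP}--\eqref{eq:primalQ}) and $B^{\top}\bm{y}$ to $\nabla_{\bm{\mu}}\hL_{\eta}$. Substituting these into the two blocks produces the scalar cross-terms $(\bm{\mu}_1-\bm{\mu}_2)^{\top}B^{\top}(\bm{y}_1-\bm{y}_2)$ and $-(\bm{y}_1-\bm{y}_2)^{\top}B(\bm{\mu}_1-\bm{\mu}_2)$, which are transposes of one another and therefore cancel exactly. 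This removes the indefinite saddle interaction without any estimate, so no constant is lost here.

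What remains are the separable convex and concave parts. Setting $F(\bm{y}):=\sum_{i\in\cN}C_i(p_i,q_i)+C_0(P_0(\bm{p}))$, Assumption~\ref{ass:costfun} makes each $C_i$ strongly convex in $(p_i,q_i)$, while $C_0(P_0(\bm{p}))$ is convex as the composition of a convex function with the affine map $P_0(\bm{p})$; hence $F$ is strongly convex in $\bm{y}$ with some modulus $m_F>0$, so its gradient satisfies $(\nabla F(\bm{y}_1)-\nabla F(\bm{y}_2))^{\top}(\bm{y}_1-\bm{y}_2)\geq m_F\|\bm{y}_1-\bm{y}_2\|_2^2$. On the dual side, after the coupling cancels, the only surviving term comes from $-\tfrac{\eta}{2}\|\bm{\mu}\|_2^2$, whose contribution to $-\nabla_{\bm{\mu}}\hL_{\eta}$ is $\eta\bm{\mu}$, yielding $\eta\|\bm{\mu}_1-\bm{\mu}_2\|_2^2$. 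Adding the two blocks gives $(T(\bm{z}_1)-T(\bm{z}_2))^{\top}(\bm{z}_1-\bm{z}_2)\geq m_F\|\bm{y}_1-\bm{y}_2\|_2^2+\eta\|\bm{\mu}_1-\bm{\mu}_2\|_2^2\geq\min\{m_F,\eta\}\,\|\bm{z}_1-\bm{z}_2\|_2^2$, establishing strong monotonicity with $m=\min\{m_F,\eta\}$.

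I expect the only real subtlety to be bookkeeping rather than a genuine obstacle: confirming that the stacked Hessian of $\sum_{i\in\cN}C_i$ is uniformly positive definite (it is block diagonal with the strongly convex $2\times 2$ blocks, up to the coordinate permutation between the $(p_i,q_i)$ ordering and the $(\bm{p},\bm{q})$ ordering) and that $C_0(P_0(\bm{p}))$ only adds a positive semidefinite term, so the primal modulus $m_F$ stays strictly positive. The coupling cancellation is exact, so the final monotonicity constant is governed purely by the strong-convexity modulus of the costs and the regularization weight $\eta$.
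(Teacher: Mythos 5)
Your proof is correct and follows essentially the same route as the paper's: the paper splits $T$ into the gradient of the separable strongly convex part (costs plus the $\tfrac{\eta}{2}\|\bm{\mu}\|_2^2$ regularizer) plus a skew-symmetric linear coupling operator, and your exact cancellation of the cross-terms is precisely the statement that this skew-symmetric part contributes nothing to the monotonicity inner product. The only difference is cosmetic --- you carry out the definition directly and make the modulus $\min\{m_F,\eta\}$ explicit, whereas the paper invokes the general fact that a strongly monotone operator plus a monotone operator is strongly monotone.
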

{We refer to Appendix for the proof.}

By Lemma~\ref{lem:mono}, there exists some constant $M>0$ such that for any $\bm{z},\bm{z'}\in\bm{\cY}\times \bm{\cU}$, one has
\begin{eqnarray}
(T(\bm{z})-T(\bm{z'}))^{\top}(\bm{z}-\bm{z'})\geq M \|\bm{z}-\bm{z'}\|_2^2.\label{eq:strongmono}
\end{eqnarray}
Moreover, based on Assumption~\ref{ass:costfun} the operator $T(\bm{z})$ is also Lipschitz continuous, i.e., there exists some constant $L>0$ such that for any $\bm{z},\bm{z'}\in\bm{\cY}\times \bm{\cU}$, we have
\begin{eqnarray}
\|T(\bm{z})-T(\bm{z'})\|_2^2\leq L^2 \|\bm{z}-\bm{z'}\|_2^2. \label{eq:lipschitz}
\end{eqnarray}

\begin{lemma}\label{lem:sigmarelation}
	The following relation holds: $M\leq L$.
\end{lemma}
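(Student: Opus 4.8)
The plan is to chain the strong-monotonicity bound \eqref{eq:strongmono} together with the Lipschitz bound \eqref{eq:lipschitz} by inserting the Cauchy--Schwarz inequality between them. First I would fix any two distinct points $\bm{z},\bm{z'}\in\bm{\cY}\times\bm{\cU}$, so that $\|\bm{z}-\bm{z'}\|_2>0$. Starting from \eqref{eq:strongmono}, the quantity $M\|\bm{z}-\bm{z'}\|_2^2$ lower-bounds the inner product $(T(\bm{z})-T(\bm{z'}))^{\top}(\bm{z}-\bm{z'})$. Applying Cauchy--Schwarz to this inner product gives $(T(\bm{z})-T(\bm{z'}))^{\top}(\bm{z}-\bm{z'})\le \|T(\bm{z})-T(\bm{z'})\|_2\,\|\bm{z}-\bm{z'}\|_2$, and the Lipschitz bound \eqref{eq:lipschitz}, written in the equivalent form $\|T(\bm{z})-T(\bm{z'})\|_2\le L\|\bm{z}-\bm{z'}\|_2$, upper-bounds the first factor on the right.

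Combining these three steps yields $M\|\bm{z}-\bm{z'}\|_2^2\le L\|\bm{z}-\bm{z'}\|_2^2$. Since $\bm{z}\neq\bm{z'}$ makes $\|\bm{z}-\bm{z'}\|_2^2$ strictly positive, dividing both sides by it produces the claimed relation $M\le L$, completing the argument.

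The hard part here is essentially nonexistent: the statement is a direct consequence of Cauchy--Schwarz applied to the two constants already established in \eqref{eq:strongmono} and \eqref{eq:lipschitz}, with Lemma~\ref{lem:mono} guaranteeing that a valid $M>0$ exists. The only point requiring a moment's care is to ensure the final division is legitimate, i.e., that one can actually pick $\bm{z}\neq\bm{z'}$ in $\bm{\cY}\times\bm{\cU}$; this is immediate because the positive orthant $\bm{\cU}$ and each compact convex set $\cY_i$ are nondegenerate, so the domain $\bm{\cY}\times\bm{\cU}$ contains more than one point.
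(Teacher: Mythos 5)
Your proof is correct and follows essentially the same route as the paper: Cauchy--Schwarz plus the Lipschitz bound to upper-bound the inner product by $L\|\bm{z}-\bm{z'}\|_2^2$, combined with the strong-monotonicity lower bound, then dividing by $\|\bm{z}-\bm{z'}\|_2^2>0$. Your extra remark that one must be able to choose $\bm{z}\neq\bm{z'}$ is a minor point the paper leaves implicit, but it does not change the argument.
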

\begin{proof}
	Apply \eqref{eq:lipschitz} to have
	\begin{eqnarray}
	(T(\bm{z})-T(\bm{z'}))^{\top}(\bm{z}-\bm{z'})&\leq& \|T(\bm{z})-T(\bm{z'})\|_2\|\bm{z}-\bm{z'}\|_2\nonumber\\
	&\leq &L\|\bm{z}-\bm{z'}\|_2^2.\label{eq:intermedia}
	\end{eqnarray}
	Combine \eqref{eq:strongmono} and \eqref{eq:intermedia} to obtain the result.
\end{proof}

Based on the results established so far, we present the next theorem that guarantees the convergence of the primal-dual gradient algorithm~\eqref{eq:mapk} with small enough stepsize. 

\begin{theorem}\label{the:converge}
If the stepsize $\epsilon$ satisfies 
\begin{eqnarray}
0<\epsilon\leq \overline{\epsilon}<2M/L^2\label{eq:converge0}
\end{eqnarray}
for some $\overline{\epsilon}$, \eqref{eq:mapk} converges to the unique saddle point of \eqref{eq:langr} exponentially fast.
\end{theorem}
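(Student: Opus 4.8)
The plan is to show that the projected primal-dual map in \eqref{eq:mapk} is a contraction toward the unique saddle point $\bm{z}^*$ guaranteed by Theorem~\ref{the:unique}, and then conclude geometric (hence exponential) convergence by iterating the contraction bound. Writing the iteration compactly as $\bm{z}(t+1)=[\bm{z}(t)-\epsilon T(\bm{z}(t))]_{\bm{\cY}\times\bm{\cU}}$, the first step I would take is to observe that $\bm{z}^*$ is a fixed point of this map: the first-order saddle conditions (minimization of $\cL_{\eta}$ over $\bm{\cY}$ in the primal block, maximization over the orthant $\bm{\cU}$ in the dual block) are precisely the variational inequality characterizing $\bm{z}^*=[\bm{z}^*-\epsilon T(\bm{z}^*)]_{\bm{\cY}\times\bm{\cU}}$ for every $\epsilon>0$.

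Next I would invoke the nonexpansiveness of the Euclidean projection onto the closed convex product set $\bm{\cY}\times\bm{\cU}$ to strip off the projection, giving
\[
\|\bm{z}(t+1)-\bm{z}^*\|_2^2\leq \big\|(\bm{z}(t)-\bm{z}^*)-\epsilon\big(T(\bm{z}(t))-T(\bm{z}^*)\big)\big\|_2^2.
\]
Expanding the right-hand side yields three terms; the cross term is bounded below by strong monotonicity \eqref{eq:strongmono} and the quadratic term above by Lipschitz continuity \eqref{eq:lipschitz}, producing
\[
\|\bm{z}(t+1)-\bm{z}^*\|_2^2\leq \big(1-2\epsilon M+\epsilon^2 L^2\big)\,\|\bm{z}(t)-\bm{z}^*\|_2^2.
\]
Denote the contraction factor by $\rho:=1-2\epsilon M+\epsilon^2 L^2$.

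It then remains to verify $\rho\in[0,1)$ under \eqref{eq:converge0}. The upper bound $\rho<1$ is immediate from $\epsilon<2M/L^2$, which forces $-2\epsilon M+\epsilon^2 L^2<0$. For the lower bound I would appeal to Lemma~\ref{lem:sigmarelation}: since $M\leq L$, one has $\rho-(1-\epsilon L)^2=2\epsilon(L-M)\geq 0$, so $\rho\geq(1-\epsilon L)^2\geq 0$ and $\sqrt{\rho}$ is a legitimate nonnegative rate. Iterating the bound then gives $\|\bm{z}(t)-\bm{z}^*\|_2\leq \rho^{t/2}\,\|\bm{z}(0)-\bm{z}^*\|_2$, i.e., exponential convergence to the unique saddle point at rate $\sqrt{\rho}$.

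The expansion and the two inequality substitutions are routine; the only steps demanding care are the first two—justifying that $\bm{z}^*$ is a fixed point of the projected map, and that the projection onto $\bm{\cY}\times\bm{\cU}$ is genuinely nonexpansive, which rests on $\bm{\cY}$ being convex and compact by assumption together with $\bm{\cU}$ being the closed positive orthant. I expect the main conceptual obstacle to be cleanly linking the saddle-point optimality conditions to the fixed-point identity; once that is in place, everything downstream is the standard strongly-monotone-plus-Lipschitz contraction estimate, with Lemma~\ref{lem:sigmarelation} serving exactly to certify $\rho\geq 0$.
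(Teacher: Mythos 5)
Your proposal is correct and follows essentially the same route as the paper's proof: non-expansiveness of the projection, expansion of the squared distance, strong monotonicity \eqref{eq:strongmono} and Lipschitz continuity \eqref{eq:lipschitz} to obtain the contraction factor $1-2\epsilon M+\epsilon^2L^2$, and Lemma~\ref{lem:sigmarelation} together with \eqref{eq:converge0} to place it in $[0,1)$. The only additions are your explicit justification that $\bm{z}^*$ is a fixed point of the projected map and the identity $\rho-(1-\epsilon L)^2=2\epsilon(L-M)\geq 0$, both of which the paper leaves implicit.
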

{We refer to Appendix for the proof.}

\subsection{Motivation for Hierarchical Design}
Note that in \eqref{eq:primaldual} the update of any $p_i$ (resp. $q_i$) involves the knowledge of $\sum_{j\in\cN}R_{ij}(\overline{\mu}_j-\underline{\mu}_j)$ (resp. $\sum_{j\in\cN}X_{ij}(\overline{\mu}_j-\underline{\mu}_j)$). Therefore, at each iteration a coordinator cognizant of the entire network's sensitivity matrices $R$ and $X$ is required to collect updated dual variables from all nodes, calculate $R^{\top}(\overline{\bm{\mu}}-\underline{\bm{\mu}})$ and $X^{\top}(\overline{\bm{\mu}}-\underline{\bm{\mu}})$, and send back the corresponding result to every node. 
This becomes computationally more challenging in a larger network containing thousands of or even more controllable endpoints, not to mention the re-calculation of the large $R,X$ matrices in case changes in network topology or regulator taps occur.

This motivates us to design a hierarchical control structure where the large network is partitioned into smaller subtrees, each managed locally by its regional coordinator (RC), and there is a central coordinator (CC) that only manages a reduced network where each subtree is treated as one node. As we will show in next section, the hierarchical control not only distributes but also reduces a large amount of computation overall.

\section{Hierarchical Distributed Algorithm}\label{sec:hier}

In this section, we explore the tree structure of the distribution network as well as the construction of its sensitivity matrices to facilitate the design of an efficient hierarchical distributed algorithm. To this end, we formally define subtree as follows.

\begin{definition}
A subtree of a tree $\cT$ is a tree consisting of a node in $\cT$, all its descendants in $\cT$, and their connecting lines.
\end{definition}

We categorize all nodes of distribution network $\cT$ into two groups: 
1) $K$ subtrees indexed by $\cT_k=\{\cN_k,\cE_k\},\ k\in\cK=\{1,\ldots,K\}$, and 2) a set $\cN_0$ collecting all the other ``unclustered" nodes in $\cN$.
Here, $\cN_k$ of size $N_k$ is the set of nodes in subtree $\cT_k$ and $\cE_k$ contains their connecting lines. Thus we have $\cup_{k\in\cK}\cN_k\cup\cN_0=\cN\cup\{0\}$ and $\cN_j \cap \cN_k=\emptyset,\forall j\neq k$.
Assume each subtree $\cT_k$ is managed by an RC cognizant of the topology of $\cT_k$ and communicating with all the controllable nodes within $\cT_k$.

Denote the root node of subtree $\cT_k$ by $n_k^0$, and consider a reduced network $\cT^r=\{\cN^r\cup\{0\},\cE^r\}$ where $\cN^r:=\cup_{k\in\cK}\{n_k^0\}\cup \cN_0$ consists of the root nodes of all subtrees and all the unclustered nodes, and $\cE^r$ is the set of their connecting lines. 
We assume there to be a CC cognizant of the topology of the reduced network $\cT^r$ and communicating with all the RCs as well as the unclustered nodes. 

%



\subsection{Hierarchical Distributed Algorithm}

For simplicity, we elaborate the algorithm design for real power injections $\bm{p}$ only, and that for $\bm{q}$ follows similarly.

We equivalently rewrite \eqref{eq:primalP} as:
\begin{eqnarray}
    &&\hspace{-8mm}p_i(t+1)=\Big[p_i(t)-\epsilon\Big(\partial_{p_i} C_i(p_i(t),q_i(t))-C'_0(P_0(t))\nonumber\\
    &&\hspace{10mm}+\sum_{j\in\cN}R_{ij}\big(\overline{\mu}_j(t)-\underline{\mu}_j(t)\big)\Big) \Big]_{\cY_i},\ i\in\cN.\label{eq:recastP}
\end{eqnarray}
Note that in \eqref{eq:recastP} while $\partial_{p_i} C_i(p_i(t),q_i(t))$ is local information and the scalar derivative $C'_0(P_0(t))$ can be easily broadcast, the last term $\sum_{j\in\cN}R_{ij}(\overline{\mu}_j(t)-\underline{\mu}_j(t))$ couples the entire network. How to efficiently compute the coupling term is the key to a scalable algorithm. For that purpose, we introduce the following lemma.

\begin{lemma}\label{lem:commonpath}
Given any two subtrees $\cT_h$ and $\cT_k$ with their root nodes $n^0_h$ and $n^0_k$, we have $R_{ij}=R_{n^0_h n^0_k},X_{ij}=X_{n^0_h n^0_k}$, for any $i\in\cN_h$, and any $j\in\cN_k$. Similarly, given any unclustered node $i\in\cN_0$ and a subtree $\cT_k$ with its root node $n^0_k$, we have $R_{ij}=R_{i n^0_k},X_{ij}=X_{i n^0_k}$, for any $j\in\cN_k$.
\end{lemma}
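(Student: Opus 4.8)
The plan is to reduce both statements to the geometry of paths back to the slack bus, since by definition \eqref{X_def} the quantities $R_{ij}$ and $X_{ij}$ depend only on the common edge set $\cE_i\cap\cE_j$. The structural fact I would establish first is that, in the tree $\cT$ rooted at node $0$, the common path $\cE_i\cap\cE_j$ is exactly the edge set of the path from $0$ to the lowest common ancestor $\text{LCA}(i,j)$ of $i$ and $j$: the root-paths $\cE_i$ and $\cE_j$ agree on an initial segment and then split, and the split occurs precisely at $\text{LCA}(i,j)$. With this in hand, both claims amount to showing that replacing $i$ and $j$ by the relevant subtree roots does not move the lowest common ancestor, after which equality of the edge sets gives equality of $R$ and $X$ immediately.

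For the first claim, I would use that every $i\in\cN_h$ is $n^0_h$ or a descendant thereof, so $\cE_i\supseteq\cE_{n^0_h}$, and symmetrically $\cE_j\supseteq\cE_{n^0_k}$. Disjointness of the subtrees, $\cN_h\cap\cN_k=\emptyset$, forces neither root to be a descendant of the other, so $n^0_h$ and $n^0_k$ first separate at $a:=\text{LCA}(n^0_h,n^0_k)$, leaving $a$ through two distinct children, one leading into $\cT_h$ and the other into $\cT_k$. Since $i$ lies below the first child and $j$ below the second, the root-paths to $i$ and $j$ likewise coincide up to $a$ and then diverge; hence $\text{LCA}(i,j)=a=\text{LCA}(n^0_h,n^0_k)$, giving $\cE_i\cap\cE_j=\cE_{n^0_h}\cap\cE_{n^0_k}$ and the stated equalities.

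For the second claim I would run the same argument with the unclustered node $i\in\cN_0$ in place of a subtree. The point is that $i\notin\cN_k$ means $i$ is neither $n^0_k$ nor a descendant of it, so from $i$'s viewpoint the branch carrying $j$ (which runs through $n^0_k$) is the branch carrying $n^0_k$ itself. Splitting into the case where $n^0_k$ is a descendant of $i$ --- so that $i$ is a common ancestor of both $j$ and $n^0_k$ and both lowest common ancestors equal $i$ --- and the complementary case where neither node is an ancestor of the other --- so that the two root-paths branch at $\text{LCA}(i,n^0_k)$, through which $j$ also descends --- I obtain $\text{LCA}(i,j)=\text{LCA}(i,n^0_k)$ in every case and hence $R_{ij}=R_{in^0_k}$, $X_{ij}=X_{in^0_k}$.

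The main obstacle is not a calculation but the clean formalization of the tree combinatorics underlying the phrase ``every member of a subtree reaches the rest of the network through the same root node.'' Concretely, the work is in proving the ``common path $=$ path to the lowest common ancestor'' fact and in using disjointness of distinct subtree branches to pin down where paths diverge; once these are set, both parts collapse to routine ancestor/descendant bookkeeping. The only delicate point is the degenerate configuration in the second claim, where the unclustered node is itself an ancestor of the subtree root and must be treated separately.
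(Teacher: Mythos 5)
Your proof is correct and follows essentially the same route as the paper's: both reduce $R_{ij}$, $X_{ij}$ to the common path $\cE_i\cap\cE_j$ back to node $0$ and then argue that this path is unchanged when $i$ and $j$ are replaced by the relevant subtree roots. The only difference is one of rigor --- the paper simply asserts the shared-common-path fact (with a figure), whereas you formalize it via the lowest common ancestor and handle the degenerate ancestor case explicitly, which is a sound and slightly more careful version of the same argument.
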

\begin{proof}
We have the two following facts, which are also illustrated in Fig.~\ref{fig:RX}.
First, by \eqref{X_def}, $R_{ij}$ (resp. $X_{ij}$) is the summed resistance (resp. reactance) on the common path of node $i$ and $j$ leading back to node 0. Second, any node in one subtree and any node in another subtree (or any node in one subtree and one unclustered node) share the same common path back to node 0. The result follows immediately.
\end{proof}

Lemma~\ref{lem:commonpath} indicates that the sensitivity matrices $R$ and $X$ possess block structure such that all nodes within one subtree $\cN_h$ share the same sensitivity with respect to all nodes within another subtree $\cN_k$; see Fig.~\ref{fig:complexity} for illustration. This permits a hierarchical distributed way to recalculate the coupling terms.

\begin{figure}
	\centering
	\includegraphics[trim=16mm 0mm 0mm 0mm,clip,scale=0.34]{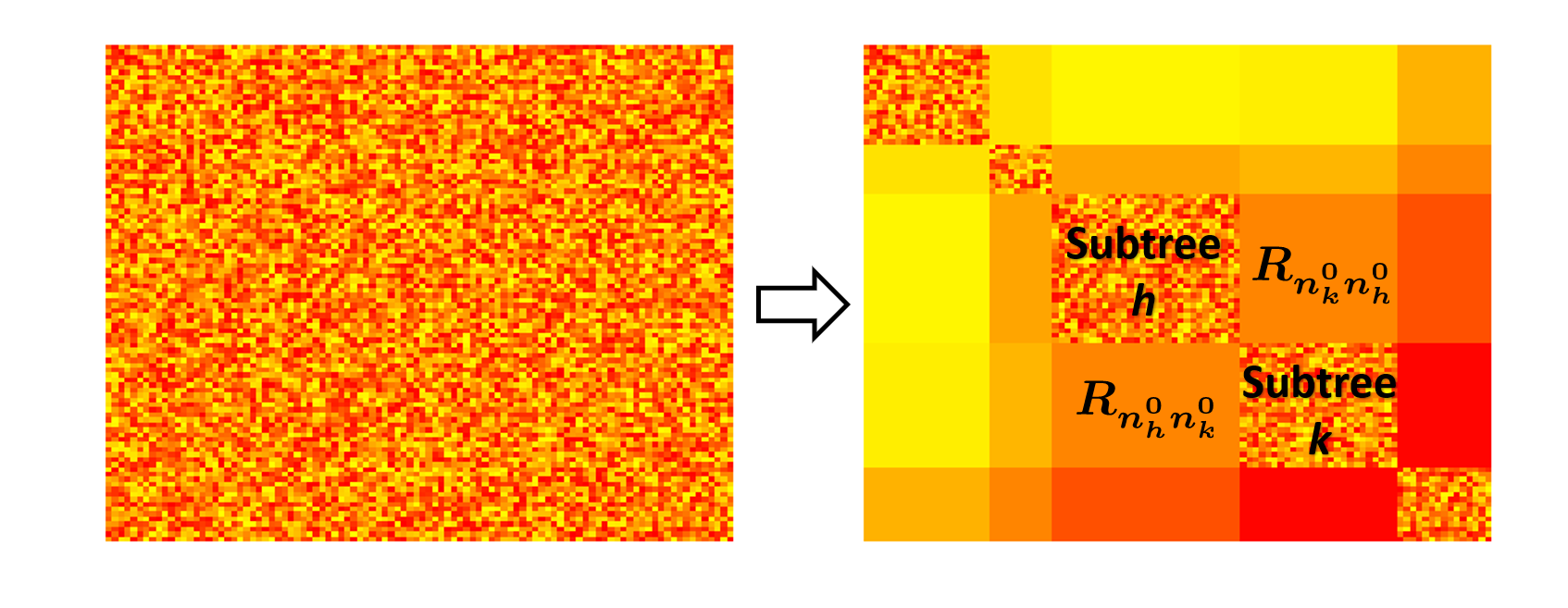}
	\caption{The sensitivity matrix has block structure such that all nodes within one subtree $\cN_h$ share the same sensitivity with respect to all nodes within another subtree $\cN_k$. For simplicity, the set of unclustered nodes $\cN_0$ is assumed to be empty in this illustration.}\label{fig:complexity}
\end{figure}

\textbf{For clustered node $i\in\cN_k$}, we decompose $\sum_{j\in\cN}R_{ij}(\overline{\mu}_j-\underline{\mu}_j)$ as:
\begin{eqnarray}
&& \sum_{j\in\cN_k}R_{ij}(\overline{\mu}_j-\underline{\mu}_j)+\hspace{-2mm}\sum_{j\in\cN\backslash\cN_k}\hspace{-2mm}R_{ij}(\overline{\mu}_j-\underline{\mu}_j)\nonumber\\
&=& \sum_{j\in\cN_k}R_{ij}(\overline{\mu}_j-\underline{\mu}_j)+\hspace{-2mm}\sum_{h\in\cK,h\neq k}\hspace{-3mm}R_{n^0_h n^0_k}\sum_{j\in\cN_h}(\overline{\mu}_j-\underline{\mu}_j)\nonumber\\[-3pt]
&&\hspace{3mm}+\sum_{j\in\cN_0}R_{n_k^0j}(\overline{\mu}_j-\underline{\mu}_j),\label{eq:decompose}
\end{eqnarray}
where the first term of \eqref{eq:decompose} consists of information within subtree $k$ (together with the line parameter from $n_k^0$ to bus 0, i.e., $R_{n_k^0n_k^0}$ and $X_{n_k^0n_k^0}$, which can be informed by CC), the second from all the other subtrees, and the third from the unclustered nodes. 
For convenience, denote by $\alpha_{i}=\sum_{j\in\cN}R_{ij}(\overline{\mu}_j-\underline{\mu}_j)=\alpha_{k,i}^{\text{in}}+\alpha_k^{\text{out}}$ with $\alpha_{k,i}^{\text{in}}$ denoting the first term in \eqref{eq:decompose} and $\alpha_{k,i}^{\text{out}}$ the summation of the second and third terms in \eqref{eq:decompose}.
Note that for $i\in\cN_k$, $\alpha_{k,i}^{\text{in}}$ is accessible by RC $k$ cognizant of the topology of subtree $k$, and that $\alpha_k^{\text{out}}$ does not involve the network structure of any other subtrees but only that of the reduced network $\cT^r$ known by CC. 

\textbf{For unclustered node $i\in\cN_0$}, similarly we can decompose $\sum_{j\in\cN}R_{ij}(\overline{\mu}_j-\underline{\mu}_j)$ as
\begin{eqnarray}
\sum_{k\in\cK}R_{i n^0_k}\sum_{j\in\cN_k}(\overline{\mu}_j-\underline{\mu}_j)+\sum_{j\in\cN_0}\hspace{-1mm}R_{ij}(\overline{\mu}_j-\underline{\mu}_j),\label{eq:decompose2}
\end{eqnarray}
whose computation only requires the topology and line parameters of the reduced network $\cT^r$ coordinated by CC. 

Eqs.~\eqref{eq:decompose}--\eqref{eq:decompose2} motivate us to design a hierarchical distributed implementation of the primal-dual gradient algorithm~\eqref{eq:primaldual}, where CC and RCs are in charge of only a portion of the whole system: CC manages the reduced network $\cT^r$ and coordinates RCs as well as unclustered nodes without knowing any structural or node-wise information within subtrees, and each RC $k$ manages its own subtrees $\cT_k$ without knowing structural or node-wise information of the other subtrees or the reduced network. This also leads to a secure and privacy-preserving design such that regional topology and node-wise information are both protected. 

We refer the detailed presentation of the single-phase hierarchical algorithm to \cite{zhou2018hierarchical} as its structure is similar to the multi-phase algorithm to be introduced later.

\subsection{Complexity Reduction}\label{sec:complexity}
The hierarchical implementation not only enables parallel computation of the coupling terms in the gradient algorithm, but also largely reduces computational loads  and communication overhead due to the following two reasons:
\begin{itemize}
	\item The term $\sum_{h\in\cK,h\neq k}R_{n^0_h n^0_k}\sum_{j\in\cN_h}(\overline{\mu}_j-\underline{\mu}_j)$ requires less computation than the original $\sum_{j\in\cN_h,h\neq k}R_{ij}(\overline{\mu}_j-\underline{\mu}_j)$;
	\item The intermediate computation result $\alpha_k^{\text{out}}$ is the same for all the nodes in $\cN_k$, reducing a lot of repetitive computations.
\end{itemize}

\subsubsection{Computational Complexity Characterization}
We compare the computational complexity in calculating the coupling term $R^{\top}(\overline{\bm{\mu}}-\underline{\bm{\mu}})$ by centrally coordinated algorithm and hierarchical distributed algorithm as follows.
\begin{itemize}
	\item Centrally coordinated algorithm takes $N^2$ multiplications and $N(N-1)$ additions leading to total complexity of $\mathcal{O}(N^2)$.
	\item For hierarchical distributed algorithm, computational complexity for general network and clustering strategy is challenging to characterize. We instead make simplification and provide intuitions on the computational reduction under the following ideal topological and clustering scenario: Assume that all $N$ nodes with voltage constraints are equally clustered into $K$ subtrees, each with $N_k=N/K$ nodes. This also ignores the third term in \eqref{eq:decompose}.
	\begin{enumerate}
	\item[(a)] To calculate \eqref{eq:decompose} for all $i\in\cN_k$, each RC performs $N_k^2$ multiplications and $N_k(N_k-1)$ additions for the first term, $N_k-1$ additions for the second term, and $N_k$ additions to add the two terms. This results in $2N_k^2+N_k-1$ operations for each RC and $K(2N_k^2+N_k-1)$ for all RCs.
	\item[(b)] CC performs $K^2$ multiplications and $(K-1)K$ additions to put the second term of \eqref{eq:decompose} together.
	\item[(c)] Adding up the results of last two steps and using $N=KN_k$ lead to a total of $2N^2/K+N+2K^2$ operations (ignoring lower-order terms in $K$).
	\item[(d)] Apparently the computation depends on the choice of $K$, e.g., when $K=1$ or $K=N$, the clustering is trivial and  there is no computational reduction. When we choose $K=(N^2/2)^{1/3}$, the total computation burden is minimized to $\mathcal{O}(N^{4/3})$, largely decreased from  $\mathcal{O}(N^{2})$. 
	\end{enumerate}
\end{itemize}

Though in practice such ideal topological and clustering structure is uncommon, a simple geographical or administrative clustering strategy will nevertheless reduce a significant amount of computation. As will be shown in Section V, a 4-subtree clustering of the original network gives us a 4-fold acceleration in convergence speed by reducing computational loads. How to optimally cluster a distribution network remains an open question for our future work.

 \subsection{Multi-Level Distributed Control}\label{sec:multi}
 In large distribution systems, one subtree may still contain too many nodes to be handled efficiently. Meanwhile, some smaller areas within subtrees may want to shelter their topology and node-wise information from RC or CC, e.g., for security concerns.
  
This as well as the fractal structure of tree topology motivates us to consider deeper clustering and multi-level control, i.e., to apply similar approaches to cluster nodes within a subtree into smaller ``sub-subtrees", as illustrated in Fig.~\ref{fig:multilevel}. Mathematically, this is done by decomposing $\alpha_{k,i}^{\text{in}}$ in the same as we do with $\alpha_{i}$ in \eqref{eq:decompose}. Even deeper clustering can be done likewise if necessary. We omit further details here.

\begin{figure}
	\centering
	\includegraphics[scale=0.22]{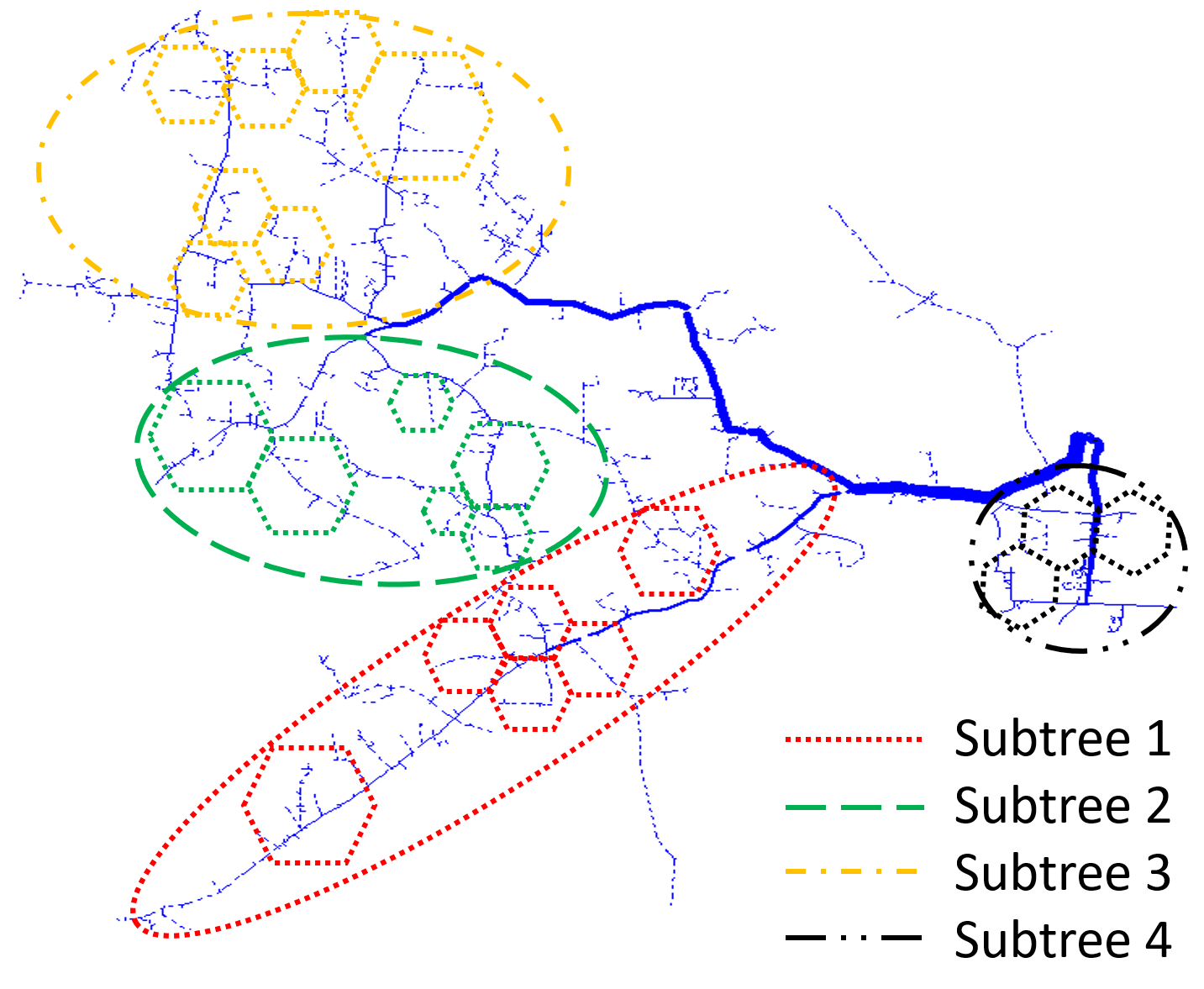}
	\caption{Multi-level control where sub-subtrees are marked by hexagons within each subtree.}\label{fig:multilevel}
\end{figure}

\section{Multi-Phase System Hierarchical Control} \label{sec:multiphase}
Given the intuitions built up from previous sections, we are now ready to design hierarchical distributed algorithms for multi-phase systems.
\subsection{Multi-Phase System Modeling and OPF Formulation}
Define $\mathfrak{i}:=\sqrt{-1}$. Let $a,b,c$ denote the three phases, and $\Phi_i$ the set of phase(s) of node $i\in\cN$, e.g., $\Phi_i=\{a,b,c\}$ for a three-phase node $i$, and $\Phi_j=\{b\}$ for a single b-phase node $j$. Also, in a three-phase system one usually has $\Phi_0=\{a,b,c\}$. Define $\cN^{\phi}\subseteq\cN$ the subset of $\cN$ collecting nodes that have phase $\phi$. Denote by $p_i^{\phi}$, $q_i^{\phi}$, $V_i^{\phi}$ and $v_i^{\phi}$ the real power injection, the reactive power injection, the complex voltage phasor, and the squared voltage magnitude, respectively, of node $i\in\cN$ at phase $\phi\in\Phi_i$. 
Denote by $N_{\Xi}:=\sum_{i\in\cN}|\Phi_i|=\sum_{\phi\in\Phi_0}|\cN^{\phi}|$ the total cardinality of the multi-phase system, where $|\cdot|$ calculates the cardinality of a set. We make the following assumptions to obtain a linearized power flow model for the multi-phase system.

\begin{assumption}\label{ass:3phase}
We consider a multi-phase distribution network where
\begin{enumerate}
    \item[\textbf{1)}] the line losses are small and ignored, and
    \item[\textbf{2)}] the magnitudes of three-phase voltages are approximately equal and the phase differences among three-phase voltages are close to $2\pi/3$, i.e.,
   $\frac{V_i^a}{V_i^b}\approx\frac{V_i^b}{V_i^c}\approx\frac{V_i^c}{V_i^a}\approx e^{\mathfrak{i} 2\pi/3}$.
\end{enumerate}
\end{assumption}
 
 
 
Let $z_{\zeta\xi}$ be the phase impedance matrix of line $(\zeta,\xi)\in\cE$. For example, if line $(\zeta,\xi)$ has three phases, 
\begin{eqnarray}
z_{\zeta\xi}=\begin{bmatrix}z^{aa}_{\zeta\xi}& z^{ab}_{\zeta\xi}&z^{ac}_{\zeta\xi}\\ z^{ba}_{\zeta\xi}& z^{bb}_{\zeta\xi}&z^{bc}_{\zeta\xi}\\z^{ca}_{\zeta\xi}& z^{cb}_{\zeta\xi}&z^{cc}_{\zeta\xi}\end{bmatrix}\in\mathbb{C}^{3\times 3},\nonumber
\end{eqnarray}
where the off-diagonal elements represent mutual impedance between two phases. Similar to $R_{ij}$ and $X_{ij}$ defined in Eq.~\eqref{X_def}, we define 
\begin{eqnarray}
Z^{\varphi\phi}_{ij}=\hspace{-4mm}\sum_{(\zeta,\xi)\in\cE_i \cap \cE_j}\hspace{-4mm}z^{\varphi\phi}_{\zeta\xi}\hspace{2mm}\in\mathbb{C}\label{eq:defineZ}
\end{eqnarray}
the summarized impedance (if $\varphi=\phi$) or the mutual impedance (if $\varphi\neq\phi$) of the common path of node $i$ and $j$ leading back to node 0, and $\overline{Z}^{\varphi\phi}_{ij}$ its conjugate. 

We denote by $\bm{v}_{_{\Xi}}=[[v_1^{\phi}]^{\top}_{\phi\in\Phi_1}, \ldots, [v_N^{\phi}]^{\top}_{\phi\in\Phi_N}]^{\top}\in\mathbb{R}^{N_{\Xi}}$ the multi-phase squared voltage magnitude vector, and $\bm{p}_{_{\Xi}}=[[p_1^{\phi}]_{\phi\in\Phi_1}^{\top}, \ldots, [p_N^{\phi}]^{\top}_{\phi\in\Phi_N}]^{\top}\in\mathbb{R}^{N_{\Xi}}$ and $\bm{q}_{_{\Xi}}=[[q_1^{\phi}]^{\top}_{\phi\in\Phi_1}, \ldots, [q_N^{\phi}]^{\top}_{\phi\in\Phi_N}]^{\top}\in\mathbb{R}^{N_{\Xi}}$ the multi-phase power injection vectors. We then extend the linearization \eqref{eq:lindistflow} to its multi-phase counterpart written as
\begin{eqnarray}
\bm{v}_{_{\Xi}}=R_{_{\Xi}}\bm{p}_{_{\Xi}}+X_{_{\Xi}}\bm{q}_{_{\Xi}}+\tilde{\bm{v}}_{_{\Xi}},\label{eq:lindistflow_phi}
\end{eqnarray}
where $\tilde{\bm{v}}_{_{\Xi}}\in\mathbb{R}^{N_{\Xi}}$ is a constant vector depending on squared voltage magnitudes at all phases of the slack bus, and the voltage-to-power sensitivity matrices $R_{_{\Xi}}, X_{_{\Xi}}\in\mathbb{R}^{N_{\Xi}\times N_{\Xi}}$ are determined by the linear approximation method developed for multi-phase system \cite{gan2014convex,gan2016online}. The elements of $R_{_{\Xi}}, X_{_{\Xi}}$ are calculated as follows (we use $a=0$, $b=1$, and $c=2$ when calculating $\phi-\varphi$):
\begin{subequations}\label{eq:RX3}
	\begin{eqnarray}
	\partial_{p_j^{\phi}}v_i^{\varphi} \hspace{-2mm}& = & \hspace{0.5mm}  2\mathfrak{Re} \big\{ \overline{Z}^{\varphi\phi}_{ij} \cdot \omega^{\varphi-\phi}\big\},\\
	\partial_{q_j^{\phi}}v_i^{\varphi} \hspace{-2mm} & = &\hspace{-2mm} -2 \mathfrak{Im} \big\{ \overline{Z}^{\varphi\phi}_{ij} \cdot \omega^{\varphi-\phi}\big\},
	\end{eqnarray}
\end{subequations}
for any $\varphi\in\Phi_i$, $\phi\in\Phi_j$, $i,j\in\cN$, with  $\omega=e^{-\mathfrak{i} 2\pi/3}$ and $\mathfrak{Re}\{\cdot\}$ and $\mathfrak{Im}\{\cdot\}$ denoting the real part and the imaginary part of a complex number. Note that when $\varphi=\phi$, Eqs.~\eqref{eq:RX3} coincide with $R_{ij}$ and $X_{ij}$ in Eqs.~\eqref{X_def} for any nodes $i,j\in\cN$; otherwise, Eqs.~\eqref{eq:RX3} calculate the summarized mutual impedance---rotated by phase difference $\pm 2\pi/3$---of the common path of nodes $i,j\in\cN$ leading back to node 0.
 
We further extend Eq.~\eqref{eq:P0} to its multi-phase counterpart as
\begin{eqnarray}
P_0=-\sum_{\phi\in\Phi_0}P_I^{\phi}-\sum_{i\in\cN}\sum_{\phi\in\Phi_i}p_i^{\phi},\label{eq:P0_phi}
\end{eqnarray}
where $P_I^{\phi}$ is the total uncontrollable (inelastic) power injection at phase $\phi\in\Phi_0$. 
We use $\bm{v}_{_{\Xi}}(\bm{p}_{_{\Xi}},\bm{q}_{_{\Xi}})$ and $P_0(\bm{p}_{_{\Xi}})$ to represent Eqs.~\eqref{eq:lindistflow_phi} and \eqref{eq:P0_phi}, and formulate the OPF problem for the multi-phase system as follows:
 \begin{subequations}\label{eq:opt_phi}
 	\begin{eqnarray}
 	\hspace{-5mm}	&\underset{\bm{p}_{_{\Xi}},\bm{q}_{_{\Xi}}}{\min} & \sum_{i\in\cN}\sum_{\phi\in\Phi_i}C_i^{\phi}(p_i^{\phi},q_i^{\phi})+C_0(P_0(\bm{p}_{_{\Xi}})),\\   
 	\hspace{-5mm}	& \text{s.t.}&
 		\underline{v}_i^{\phi} \leq v_i^{\phi}(\bm{p}_{_{\Xi}},\bm{q}_{_{\Xi}}) \leq \overline{v}_i^{\phi}, \phi\in\Phi_i, \forall i\in\cN,\label{eq:voltreg_phi}\\
 	\hspace{-5mm}	&     & (p_i^{\phi},q_i^{\phi})\in\cY_i^{\phi},\phi\in\Phi_i, \forall i\in\cN. \label{eq:X_phi}
 	\end{eqnarray}
 \end{subequations}

Associate dual variables $\overline{\bm{\mu}}_{_{\Xi}}$ and $\underline{\bm{\mu}}_{_{\Xi}}$ with \eqref{eq:voltreg_phi} and we write the regularized Lagrangian of \eqref{eq:opt_phi} as:
\begin{eqnarray}
\cL^{\Phi}_{\eta}(\bm{p}_{_{\Xi}},\bm{q}_{_{\Xi}};\overline{\bm{\mu}}_{_{\Xi}},\underline{\bm{\mu}}_{_{\Xi}})=\sum_{i\in\cN}\sum_{\phi\in\Phi_i}C_i^{\phi}(p_i^{\phi},q_i^{\phi})+C_0(P_0(\bm{p}_{_{\Xi}}))\nonumber\\[-5pt]
+\underline{\bm{\mu}}_{_{\Xi}}^{\top}(\underline{\bm{v}}_{_{\Xi}}-\bm{v}_{_{\Xi}}(\bm{p}_{_{\Xi}},\bm{q}_{_{\Xi}}))+\overline{\bm{\mu}}_{_{\Xi}}^{\top}(\bm{v}_{_{\Xi}}(\bm{p}_{_{\Xi}},\bm{q}_{_{\Xi}})-\overline{\bm{v}}_{_{\Xi}})-\frac{\eta}{2}\|\bm{\mu}_{_{\Xi}}\|^2_2.\label{eq:langr2}\nonumber
\end{eqnarray}

Note that the multi-phase sensitivity matrices $R_{_{\Xi}}$ and $X_{_{\Xi}}$ in Eqs.~\eqref{eq:RX3} have the similar structure as their single-phase counterparts $R$ and $X$ defined in Eqs.~\eqref{X_def}, i.e., the values of $	\partial_{p_j^{\varphi}}v_i^{\phi}$ and $\partial_{q_j^{\varphi}}v_i^{\phi}$ for any $i,j\in\cN$ only depend on the common path of $i$ and $j$ leading back to node 0, as well as their angle difference $\phi-\varphi$. This motivates us to design a similar hierarchical control structure for the multi-phase system.
 
 \subsection{Multi-Phase Hierarchical Distributed Algorithm}
The primal-dual gradient algorithm for solving  the regularized Lagrangian of the convex optimization problem \eqref{eq:opt_phi} reads:
\begin{subequations}\label{eq:primaldual3}
\begin{eqnarray}
\hspace{-6mm}p_i^{\phi}(t+1)\hspace{-2mm}&=&\hspace{-2mm}\Big[p^{\phi}_i(t)-\epsilon\Big(\partial_{p^{\phi}_i} C^{\phi}_i(p^{\phi}_i(t),q^{\phi}_i(t))-\nonumber\\[-2pt]
\hspace{-6mm}&&\hspace{-15mm}C'_0(P_0(t))\hspace{-1mm}+\hspace{-1mm}\sum_{j\in\cN}\sum_{\varphi\in\Phi_j}	\partial_{p_i^{\phi}}v_j^{\varphi}\big(\overline{\mu}^{\varphi}_j(t)-\underline{\mu}^{\varphi}_j(t)\big)\Big) \Big]_{\cY^{\phi}_i},\label{eq:mpPi}\\[-4pt]
\hspace{-6mm}q_i^{\phi}(t+1)\hspace{-2mm}&=&\hspace{-2mm}\Big[q^{\phi}_i(t)-\epsilon\Big(\partial_{q^{\phi}_i} C^{\phi}_i(p^{\phi}_i(t),q^{\phi}_i(t))+\nonumber\\[-2pt]
\hspace{-6mm}&&\hspace{2mm}\sum_{j\in\cN}\sum_{\varphi\in\Phi_j}	\partial_{q_i^{\phi}}v_j^{\varphi}\big(\overline{\mu}^{\varphi}_j(t)-\underline{\mu}^{\varphi}_j(t)\big)\Big) \Big]_{\cY^{\phi}_i},\label{eq:mpQi}\\[-2pt]
\hspace{-6mm}\underline{\mu}^{\phi}_i(t+1)\hspace{-2mm}&=&\hspace{-2mm}[\underline{\mu}^{\phi}_i(t)+\epsilon (\underline{v}^{\phi}_i-v^{\phi}_i(t)-\eta \underline{\mu}^{\phi}_i(t))]_+,\label{eq:mpmu1}\\
\hspace{-6mm}\overline{\mu}^{\phi}_i(t+1)\hspace{-2mm}&=&\hspace{-2mm}[\overline{\mu}^{\phi}_i(t)+\epsilon (v^{\phi}_i(t)-\overline{v}^{\phi}_i-\eta \overline{\mu}^{\phi}_i(t))]_+,\label{eq:mpmu2}\\
\hspace{-6mm}\bm{v}_{_{\Xi}}(t+1)\hspace{-2mm}&=&\hspace{-2mm} R_{_{\Xi}}\bm{p}_{_{\Xi}}(t+1)+X_{_{\Xi}}\bm{q}_{_{\Xi}}(t+1)+\tilde{\bm{v}}_{_{\Xi}},\label{eq:linearphi}\\
\hspace{-6mm}P_0(t+1)\hspace{-2mm}&=&\hspace{-2mm}-\sum_{\phi\in\Phi_0}P_I^{\phi}-\sum_{i\in\cN}\sum_{\phi\in\Phi_i}p_i^{\phi}(t+1),\label{eq:P0phi}
\end{eqnarray}
\end{subequations}
where \eqref{eq:mpPi}--\eqref{eq:mpmu2} are for all $\phi\in\Phi_i$ and all $i\in\cN$. One can obtain similar results as in Theorem~\ref{the:unique}--\ref{the:converge} for the Lagrangian $\cL^{\Phi}_{\eta}$ and the primal-dual gradient algorithm \eqref{eq:primaldual3}. We omit the details here to avoid repetition.

Note that the last terms in the primal update steps \eqref{eq:mpPi}--\eqref{eq:mpQi} not only couple all nodes but also multiple phases together. Nevertheless, similar to Eq.~\eqref{eq:decompose}, we can decompose the coupling terms based on Eqs.~\eqref{eq:defineZ}--\eqref{eq:RX3} along with the radial topology of the network. To this end, we consider the same subtree structure and notations defined in Section~\ref{sec:hier}. We use real power updates for illustration. 

\textbf{For clustered node $i\in\cN_k$}, we have the following decomposition for any $\phi\in\Phi_i$:
\begin{eqnarray}
\hspace{-9mm}&&\hspace{-1mm}2 \sum_{j\in\cN}\sum_{\varphi\in\Phi_j} \mathfrak{Re} \big\{ \overline{Z}^{\varphi\phi}_{ji}\omega^{\varphi-\phi}\big\}\cdot\big(\overline{\mu}^{\varphi}_j(t)-\underline{\mu}^{\varphi}_j(t)\big)\nonumber\\[-3pt]
\hspace{-9mm}&=&\hspace{-1mm}2\mathfrak{Re} \Big\{ \sum_{\varphi\in\Phi_0}\omega^{\varphi-\phi}\sum_{j\in\cN^{\varphi}}  \overline{Z}^{\varphi\phi}_{ji} \big(\overline{\mu}^{\varphi}_j(t)-\underline{\mu}^{\varphi}_j(t)\big)\Big\}
\end{eqnarray}
\begin{eqnarray}
\hspace{-9mm}&=&\hspace{-1mm}2\mathfrak{Re} \Big\{ \sum_{\varphi\in\Phi_0}\omega^{\varphi-\phi}\Big(\hspace{-4mm}\sum_{j\in\cN^{\varphi}\cap\cN_k}\hspace{-4mm}  \overline{Z}^{\varphi\phi}_{ji} \big(\overline{\mu}^{\varphi}_j(t)-\underline{\mu}^{\varphi}_j(t)\big)\nonumber\\
\hspace{-9mm}&&\hspace{-4mm}+\hspace{-8mm}\sum_{\hspace{3mm}j\in\cN^{\varphi}\cap(\hspace{-3mm}\underset{h\in\cK,h\neq k}{\cup}\hspace{-3mm}\cN_h)} \hspace{-7mm} \overline{Z}^{\varphi\phi}_{ji} \big(\overline{\mu}^{\varphi}_j(t)-\underline{\mu}^{\varphi}_j(t)\big)\hspace{-1mm}+\hspace{-5mm}\sum_{j\in\cN^{\varphi}\cap\cN_0} \hspace{-5mm} \overline{Z}^{\varphi\phi}_{ji} \big(\overline{\mu}^{\varphi}_j(t)-\underline{\mu}^{\varphi}_j(t)\big)\hspace{-0.8mm}\Big)\Big\}\hspace{-10mm}\nonumber\\
\hspace{-9mm}&=&\hspace{-1mm}2\mathfrak{Re} \Big\{ \sum_{\varphi\in\Phi_0}\omega^{\varphi-\phi}\Big(\hspace{-4mm}\sum_{j\in\cN^{\varphi}\cap\cN_k}\hspace{-4mm}  \overline{Z}^{\varphi\phi}_{ji} \big(\overline{\mu}^{\varphi}_j(t)-\underline{\mu}^{\varphi}_j(t)\big)\nonumber\\
\hspace{-9mm}&&\hspace{19mm}+\sum_{\substack{n_h^0 \in N^\varphi\\h\in\cK,h\neq k }}\hspace{-3mm}\overline{Z}^{\varphi\phi}_{n_h^0 n_k^0}\hspace{-4mm}\sum_{j\in\cN^{\varphi}\cap\cN_h}\hspace{-4mm}\big(\overline{\mu}^{\varphi}_j(t)-\underline{\mu}^{\varphi}_j(t)\big)\nonumber\\
\hspace{-9mm}&&\hspace{19mm}+\sum_{j\in\cN^{\varphi}\cap\cN_0}\hspace{-4mm}\overline{Z}^{\varphi\phi}_{j n_k^0}\big(\overline{\mu}^{\varphi}_j(t)-\underline{\mu}^{\varphi}_j(t)\big)\Big)\Big\},\label{eq:decompose3}
\end{eqnarray}
with $Z^{\varphi\phi}_{n_h^0 n_k^0}=\sum_{(\zeta,\xi)\in\cE_{n_k^0} \cap \cE_{n_h^0}} z^{\varphi\phi}_{\zeta\xi}$ summarizing the (mutual) impedance on the common path of subtree $h$ and subtree $k$ and $Z^{\varphi\phi}_{j n_k^0}=\sum_{(\zeta,\xi)\in\cE_j \cap \cE_{n_k^0}} z^{\varphi\phi}_{\zeta\xi}$ summarizing the (mutual) impedance on the common path of unclustered node $j$ and subtree $k$.

\textbf{For unclustered node $i\in\cN_0, \forall\phi\in\Phi_i$}, similarly we have:
\begin{eqnarray}
\hspace{-8mm}&&2 \sum_{j\in\cN}\hspace{-1mm}\sum_{\varphi\in\Phi_j} \mathfrak{Re} \big\{ \overline{Z}^{\varphi\phi}_{ji}  \omega^{\varphi-\phi}\big\}\cdot\big(\overline{\mu}^{\varphi}_j(t)-\underline{\mu}^{\varphi}_j(t)\big)\nonumber\\
\hspace{-8mm}&=&2\mathfrak{Re} \Big\{ \sum_{\varphi\in\Phi_0}\omega^{\varphi-\phi}\Big(\sum_{\substack{n_k^0 \in N^\varphi\\k\in\cK }}\overline{Z}^{\varphi\phi}_{n_k^0 i}\hspace{-4mm}\sum_{j\in\cN^{\varphi}\cap\cN_k}\hspace{-4mm}\big(\overline{\mu}^{\varphi}_j(t)-\underline{\mu}^{\varphi}_j(t)\big)\nonumber\\
\hspace{-8mm}&&\hspace{22mm}+\sum_{j\in\cN^{\varphi}\cap\cN_0} \hspace{-4mm} \overline{Z}^{\varphi\phi}_{ji} \big(\overline{\mu}^{\varphi}_j(t)-\underline{\mu}^{\varphi}_j(t)\big)\Big)\Big\}.\label{eq:decompose4}
\end{eqnarray}

One can apply similar approaches to obtain the decomposed results to update reactive power injections. Then the resultant equivalent form of Eqs.~\eqref{eq:primaldual3} can be implemented in a hierarchical distributed way with the collaboration of RCs and CC. We summarize the resultant algorithm for multi-phase systems based on the primal-dual gradient algorithm \eqref{eq:primaldual3} and the decomposition strategies \eqref{eq:decompose3}--\eqref{eq:decompose4} in Algorithm~\ref{alg:distalg2} where we use $s^{\phi}_i(t)=\alpha^{\phi}_i(t)+\mathfrak{i} \beta^{\phi}_i(t)$ for notational simplicity. We also use $s_k^{\phi_\text{out}}$ and $s^{\phi_\text{in}}_{k,i}$ to denote the parts of $s^{\phi}_i(t)$ that are calculated outside of subtree $k$ and within subtree $k$, respectively. 
See Fig.~\ref{fig:testfeeder} for an illustration of clustering and information flows in Algorithm~\ref{alg:distalg2}. 
Due to their mathematical equivalence,   Algorithm~\ref{alg:distalg2} and Eqs.~\eqref{eq:primaldual3} share the same dynamic properties, which will also be illustrated in Section~\ref{sec:numerical}.

\begin{algorithm*}[ht]
	\caption{Hierarchical Distributed Voltage Regulation for Multi-Phase Distribution Netoworks} 
	\begin{algorithmic}\label{alg:distalg2}
		
	\REPEAT
		
	\STATE[1] node $i\in\cN$ sets $\alpha^{\phi}_i(t)=\mathfrak{Re}\{s^{\phi}_i(t)\}, \beta^{\phi}_i(t)=-\mathfrak{Im}\{s^{\phi}_i(t)\}$, and updates $(p^{\phi}_i(t+1),q^{\phi}_i(t+1))$ for all $\phi\in\Phi_i$ by
		\begin{subequations}
			\begin{eqnarray}
			p^{\phi}_i(t+1)&=&[p^{\phi}_i(t)-\epsilon(\partial_{p^{\phi}_i} C^{\phi}_i(p^{\phi}_i(t),q^{\phi}_i(t)) - C'_0(P_0(t)) +\alpha^{\phi}_{i}(t))]_{\cY^{\phi}_i},\\ q^{\phi}_i(t+1)&=&[q^{\phi}_i(t)-\epsilon(\partial_{q^{\phi}_i} C^{\phi}_i(p^{\phi}_i(t),q^{\phi}_i(t)) +\beta^{\phi}_{i}(t))]_{\cY^{\phi}_i},
			\end{eqnarray}
		\end{subequations}
		
		and $(\underline{\mu}^{\phi}_i(t+1),\overline{\mu}^{\phi}_i(t+1))$ for all $\phi\in\Phi_i$ based on local voltage magnitude by
		\begin{subequations}
			\begin{eqnarray}
			\underline{\mu}^{\phi}_i(t+1)&=&[\underline{\mu}^{\phi}_i(t)+\epsilon (\underline{v}^{\phi}_i-v^{\phi}_i(t)-\eta \underline{\mu}^{\phi}_i(t))]_+,\\ \overline{\mu}^{\phi}_i(t+1)&=&[\overline{\mu}^{\phi}_i(t)+\epsilon (v^{\phi}_i(t)-\overline{v}^{\phi}_i-\eta \overline{\mu}^{\phi}_i(t))]_+.
			\end{eqnarray}
		\end{subequations}

	\STATE[2] RC $k\in\cK$ sends $\sum_{j\in\cN^{\varphi}\cap\cN_k}\big(\overline{\mu}^{\varphi}_j(t)-\underline{\mu}^{\varphi}_j(t)\big), \forall \varphi\in\Phi_{n_k^0}$ to CC, and 
		unclustered node $j\in\cN_0$ sends $\big(\overline{\mu}^{\varphi}_j(t)-\underline{\mu}^{\varphi}_j(t)\big), \forall \varphi\in\Phi_j$ to CC.

	\STATE[3] CC computes within the reduced network $\forall \phi\in\Phi_0$:
	\begin{eqnarray}
		\hspace{-4mm}s_k^{\phi_\text{out}}(t+1)&=&2 \sum_{\varphi\in\Phi_0}\omega^{\varphi-\phi}\Big(\sum_{\substack{n_h^0 \in N^\varphi\\h\in\cK,h\neq k }}\overline{Z}^{\varphi\phi}_{n_h^0 n_k^0}\hspace{-4mm}\sum_{j\in\cN^{\varphi}\cap\cN_h}\hspace{-4mm}\big(\overline{\mu}^{\varphi}_j(t)-\underline{\mu}^{\varphi}_j(t)\big)+\hspace{-4mm}\sum_{j\in\cN^{\varphi}\cap\cN_0}\hspace{-4mm}\overline{Z}^{\varphi\phi}_{j n_k^0}\big(\overline{\mu}^{\varphi}_j(t)-\underline{\mu}^{\varphi}_j(t)\big)\Big),\  \forall k\in\cK,\\
			s^{\phi}_i(t+1)&=&2 \sum_{\varphi\in\Phi_0}\omega^{\varphi-\phi}\Big(\sum_{\substack{n_k^0 \in N^\varphi\\k\in\cK}}\overline{Z}^{\varphi\phi}_{n_k^0 i}\hspace{-4mm}\sum_{j\in\cN^{\varphi}\cap\cN_k}\hspace{-4mm}\big(\overline{\mu}^{\varphi}_j(t)-\underline{\mu}^{\varphi}_j(t)\big)+\hspace{-4mm}\sum_{j\in\cN^{\varphi}\cap\cN_0} \hspace{-4mm} \overline{Z}^{\varphi\phi}_{ji} \big(\overline{\mu}^{\varphi}_j(t)-\underline{\mu}^{\varphi}_j(t)\big)\Big),\ \forall i\in\cN_0,
	\end{eqnarray}
	and sends $[s_k^{\phi_\text{out}}(t+1)]^{\top}_{\phi\in\Phi_0}$ to RC $k\in\cK$,  and 
	$[s^{\phi}_i(t+1)]^{\top}_{\phi\in\Phi_0}$
		to node $i\in\cN_0$.
		
		\STATE[4] RC $k\in\cK$ calculates $s^{\phi_\text{in}}_{k,i}$ and $s^{\phi}_{i}$ within subtree $k$, $\forall i\in\cN_k,\ \forall \phi\in\Phi_i$ by
		\begin{eqnarray}
			s^{\phi_\text{in}}_{k,i}(t+1)&=&2\sum_{\varphi\in\Phi_0}\omega^{\varphi-\phi}\hspace{-4mm}\sum_{j\in\cN^{\varphi}\cap\cN_k}\hspace{-4mm}  \overline{Z}^{\varphi\phi}_{ji} \big(\overline{\mu}^{\varphi}_j(t)-\underline{\mu}^{\varphi}_j(t)\big),\\
			s^{\phi}_{i}(t+1)&=&s^{\phi_\text{in}}_{k,i}(t+1)+s^{\phi_\text{out}}_k(t+1),
		\end{eqnarray}
		and sends 
		$[s^{\phi}_i(t+1)]^{\top}_{\phi\in\Phi_i}$ to node $i\in\cN_k$.

	\STATE[5] $\bm{v}_{_{\Xi}}(t+1)$ and $P_0(t+1)$ are updated by:
		\begin{eqnarray}
		\bm{v}_{_{\Xi}}(t+1)&=&R_{_{\Xi}}\bm{p}_{_{\Xi}}(t+1)+X_{_{\Xi}}\bm{q}_{_{\Xi}}(t+1)+\tilde{\bm{v}}_{_{\Xi}},\\ P_0(t+1)&=&-\sum_{\phi\in\Phi_0}P_I^{\phi}-\sum_{i\in\cN}\sum_{\phi\in\Phi_i}p_i^{\phi}(t+1).
		\end{eqnarray}
		
	\STATE[6] CC computes/measures $P_0(t+1)$ at the substation and broadcasts $C'_0(P_0(t+1))$.
		
	\UNTIL stopping criterion is met (e.g., $|P^{\phi}_0(\bm{p}^{\phi}(t+1))-P^{\phi}_0(\bm{p}^{\phi}(t))|<\sigma, \ \forall \phi\in\Phi_0$ for some small $\sigma>0$)
	\end{algorithmic}
\end{algorithm*}

\begin{figure}
	\centering
	\includegraphics[scale=0.25]{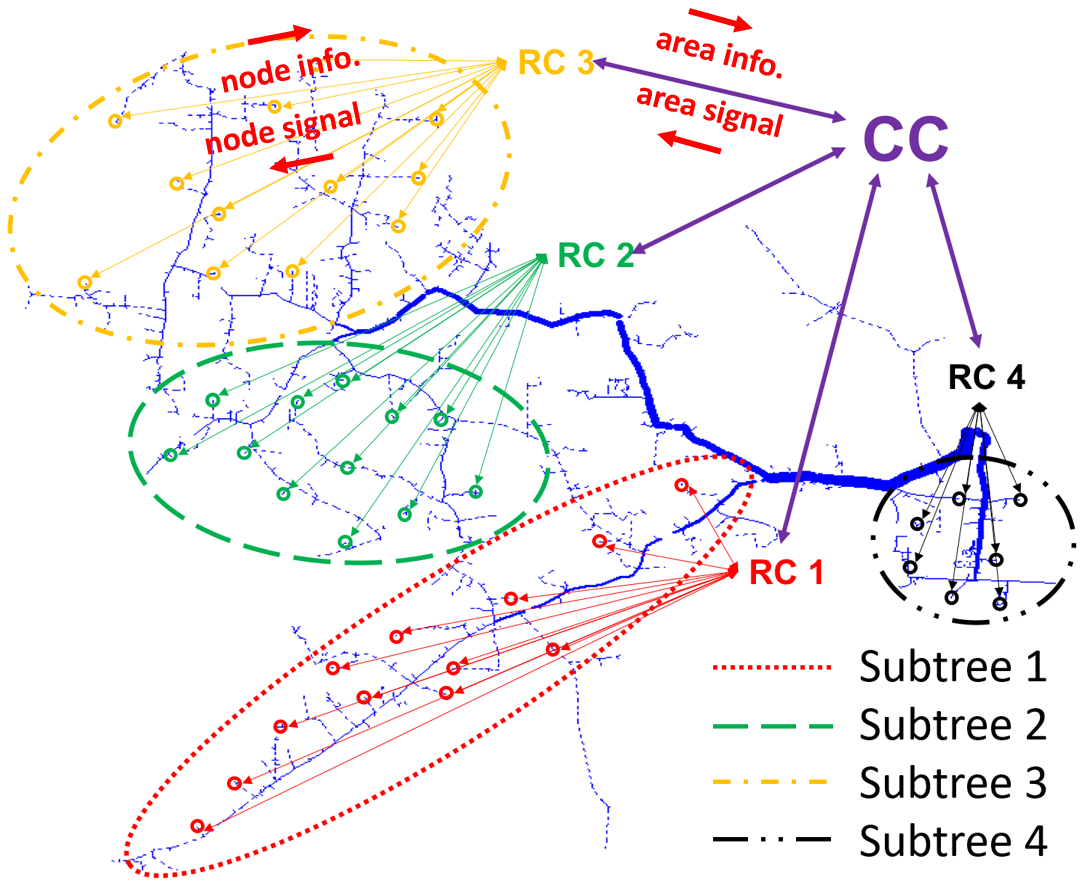}
	\caption{11000-node test feeder constructed from an IEEE 8500-node test feeder and a modified EPRI Test Circuits Ckt7. Four subtrees are formed for our experiments.}\label{fig:testfeeder}
\end{figure}

\subsection{Nonlinear Power Flow Analysis}
In this part, we implement Algorithm~\ref{alg:distalg2} with nonlinear power flow, which can be either balanced or unbalanced, and characterize its performance. To this end, we denote the voltage magnitudes and the total active power loads updated by the nonlinear multi-phase power flow equations by $(\hat{\bm{v}}_{_{\Xi}},\hat{P}_0)=\mathcal{F}(\bm{p}_{_{\Xi}},\bm{q}_{_{\Xi}})$, where the nonlinear power flow $\mathcal{F}$ can be updated based on some specific model from literature or can be measured from physical power flow.

\begin{assumption}\label{ass:model}
	The discrepancy between the linearized model and the nonlinear model is bounded, i.e., there exists some constants $e_1,e_2>0$ such that given any $(\bm{p}_{_{\Xi}},\bm{q}_{_{\Xi}})\in\bm{\cY}$ we have
	\begin{eqnarray}
	\|\bm{v}_{_{\Xi}}(\bm{p}_{_{\Xi}},\bm{q}_{_{\Xi}})-	\hat{\bm{v}}_{_{\Xi}}(\bm{p}_{_{\Xi}},\bm{q}_{_{\Xi}})\|_2&\leq& e_1,\nonumber\\
	|P_0(\bm{p}_{_{\Xi}})-\hat{P}_0(\bm{p}_{_{\Xi}},\bm{q}_{_{\Xi}})|&\leq& e_2.\nonumber
	\end{eqnarray}
\end{assumption}


Similar to Section~\ref{sec:conv}, we define $\bm{\mu}_{_{\Xi}} :=[\underline{\bm{\mu}}_{_{\Xi}}^{\top}, \overline{\bm{\mu}}_{_{\Xi}}^{\top}]^{\top}$ and $\bm{z}_{_{\Xi}}:=[\bm{p}_{_{\Xi}}^{\top},\bm{q}_{_{\Xi}}^{\top},\bm{\mu}_{_{\Xi}}^{\top}]^{\top}$ to equivalently rewrite the primal-dual gradient dynamics \eqref{eq:primaldual3} as 
\begin{eqnarray}
&&\bm{z}_{_{\Xi}}(t+1)=\left[\bm{z}_{_{\Xi}}(t)-\epsilon T_{_{\Xi}}(\bm{z}_{_{\Xi}}(t)) \right]_{\bm{\cY}\times \bm{\cU}}\hspace{-4mm},\label{eq:mapk2}
\end{eqnarray}
with its gradient operators denoted by 
\begin{eqnarray}
&&T_{_{\Xi}}(\bm{z}_{_{\Xi}}):=\begin{bmatrix}\nabla_{\bm{y}_{_{\Xi}}} \hL^{\Phi}_{\eta}(\bm{y}_{_{\Xi}}(t), \bm{\mu}_{_{\Xi}}(t)) \\ -\nabla_{\bm{\mu}_{_{\Xi}}}\hL^{\Phi}_{\eta}(\bm{y}_{_{\Xi}}(t), \bm{\mu}_{_{\Xi}}(t))
\end{bmatrix}.\nonumber
\end{eqnarray}

Note that Lipschitz continuity and strong monotonicity hold for $T_{_{\Xi}}(\bm{z}_{_{\Xi}})$ with positive constants $L_{_{\Xi}}$ and $M_{_{\Xi}}$ for any feasible $\bm{z}_{_{\Xi}},\bm{z}'_{_{\Xi}}$ as follows:
\begin{subequations}\label{eq:multiproperty}
	\begin{eqnarray}
	\hspace{-8mm}&&(T_{_{\Xi}}(\bm{z}_{_{\Xi}})-T_{_{\Xi}}(\bm{z_{_{\Xi}}'}))^{\top}(\bm{z}_{_{\Xi}}-\bm{z}_{_{\Xi}}')\geq M_{_{\Xi}} \|\bm{z}_{_{\Xi}}-\bm{z_{_{\Xi}}}'\|_2^2,\label{eq:multiproperty1}\\
	\hspace{-8mm}&&\|T_{_{\Xi}}(\bm{z}_{_{\Xi}})-T_{_{\Xi}}(\bm{z}_{_{\Xi}}')\|_2^2\leq L_{_{\Xi}}^2 \|\bm{z}_{_{\Xi}}-\bm{z_{_{\Xi}}}'\|_2^2, \label{eq:multiproperty2}\\
	\hspace{-8mm}&&M_{_{\Xi}} \leq L_{_{\Xi}}.\label{eq:multiproperty3}
	\end{eqnarray}
\end{subequations}

To implement the gradient algorithm with nonlinear power flow, we replace $C'_0(P_0(t))$ in \eqref{eq:mpPi} with $C'_0(\hat{P}_0(t))$, $v^{\phi}_i(t)$ in \eqref{eq:mpmu1}--\eqref{eq:mpmu2} with $\hat{v}^{\phi}_i(t)$, and \eqref{eq:linearphi}--\eqref{eq:P0phi} with $(\hat{\bm{v}}_{_{\Xi}}(t+1),\hat{P}_0(t+1))=\mathcal{F}(\bm{p}_{_{\Xi}}(t+1),\bm{q}_{_{\Xi}}(t+1))$. In other words, the values of $\hat{\bm{v}}_{_{\Xi}},\hat{P}_0$ are updated by the nonlinear power flow while the gradient of $\hat{\bm{v}}_{_{\Xi}},\hat{P}_0$ with respect to decision variables are calculated based on the linearized model \eqref{eq:lindistflow_phi}--\eqref{eq:P0_phi}. Similar model-based feedback control has been applied and characterized with provable performance in recent literature, e.g., \cite{colombino2019online,dall2018optimala,zhou2017incentive}. 

We denote the gradient operator with nonlinear power flow by $\hat{T}_{_{\Xi}}(\bm{z}_{_{\Xi}})$, and the resultant projected gradient algorithm as:
\begin{eqnarray}
&&\bm{z}_{_{\Xi}}(t+1)=\left[\bm{z}_{_{\Xi}}(t)-\epsilon \hat{T}_{_{\Xi}}(\bm{z}_{_{\Xi}}) \right]_{\bm{\cY}\times \bm{\cU}}.\label{eq:mapk3}
\end{eqnarray}

Based on Assumption~\ref{ass:model} and Lipschitz continuity of our gradient operator, we must have
\begin{eqnarray}
\|T_{_{\Xi}}(\bm{z}_{_{\Xi}})-\hat{T}_{_{\Xi}}(\bm{z}_{_{\Xi}})\|^2_2\leq \rho\label{eq:rho}
\end{eqnarray}
for some constant $\rho>0$. We characterize the convergence of dynamics \eqref{eq:mapk3} next.

\begin{theorem}\label{the:nonlinear}
	Under Assumptions~\ref{ass:costfun}--\ref{ass:model} and a stepsize $\epsilon$ chosen according to
	\begin{eqnarray}
		0<\epsilon\leq \bar{\epsilon}<2M_{_{\Xi}}/L_{_{\Xi}}^2\label{eq:stepsize}
	\end{eqnarray}
	for some $\bar{\epsilon}$, dynamics \eqref{eq:mapk3} converges to the saddle point $\bm{z}_{_{\Xi}}^*$ of $\cL^{\Phi}_{\eta}$ as
	\begin{eqnarray}
	\lim_{t\rightarrow\infty}\sup\|\bm{z}_{_{\Xi}}(t)-\bm{z}_{_{\Xi}}^*\|_2^2 = \frac{\rho}{2M_{_{\Xi}}/\epsilon -L_{_{\Xi}}^2}.\label{eq:converge}
	\end{eqnarray}
\end{theorem}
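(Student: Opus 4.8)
The plan is to treat the nonlinear dynamics \eqref{eq:mapk3} as a perturbed version of the linearized dynamics \eqref{eq:mapk2}, and to control the accumulated perturbation through the contraction established for the linearized map. First I would recall from Theorem~\ref{the:converge} (applied to the multi-phase operator $T_{_{\Xi}}$ using \eqref{eq:multiproperty}) that the ideal projected update $G(\bm{z}_{_{\Xi}}) := [\bm{z}_{_{\Xi}} - \epsilon T_{_{\Xi}}(\bm{z}_{_{\Xi}})]_{\bm{\cY}\times\bm{\cU}}$ is a contraction with modulus $\gamma := \sqrt{1 - 2M_{_{\Xi}}\epsilon + L_{_{\Xi}}^2\epsilon^2}$, so that $\|G(\bm{z}_{_{\Xi}}) - \bm{z}_{_{\Xi}}^*\|_2 \le \gamma \|\bm{z}_{_{\Xi}} - \bm{z}_{_{\Xi}}^*\|_2$, with $\gamma < 1$ guaranteed precisely by the stepsize condition \eqref{eq:stepsize}. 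The key inequality underlying this is the standard expansion $\|\bm{z} - \epsilon T_{_{\Xi}}(\bm{z}) - \bm{z}^*\|_2^2 = \|\bm{z}-\bm{z}^*\|_2^2 - 2\epsilon(T_{_{\Xi}}(\bm{z})-T_{_{\Xi}}(\bm{z}^*))^\top(\bm{z}-\bm{z}^*) + \epsilon^2\|T_{_{\Xi}}(\bm{z})-T_{_{\Xi}}(\bm{z}^*)\|_2^2$, combined with nonexpansiveness of the projection and the fact that $\bm{z}_{_{\Xi}}^*$ is a fixed point of $G$.

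Next I would write the nonlinear update as $\bm{z}_{_{\Xi}}(t+1) = [\bm{z}_{_{\Xi}}(t) - \epsilon \hat{T}_{_{\Xi}}(\bm{z}_{_{\Xi}}(t))]_{\bm{\cY}\times\bm{\cU}}$ and insert and subtract the exact gradient $T_{_{\Xi}}$. Using nonexpansiveness of the projection operator and the triangle inequality, I would bound
\begin{eqnarray}
\|\bm{z}_{_{\Xi}}(t+1) - \bm{z}_{_{\Xi}}^*\|_2 &\le& \|[\bm{z}_{_{\Xi}}(t) - \epsilon T_{_{\Xi}}(\bm{z}_{_{\Xi}}(t))]_{\bm{\cY}\times\bm{\cU}} - \bm{z}_{_{\Xi}}^*\|_2 \nonumber\\
&& +\ \epsilon\|T_{_{\Xi}}(\bm{z}_{_{\Xi}}(t)) - \hat{T}_{_{\Xi}}(\bm{z}_{_{\Xi}}(t))\|_2 \nonumber\\
&\le& \gamma\|\bm{z}_{_{\Xi}}(t)-\bm{z}_{_{\Xi}}^*\|_2 + \epsilon\sqrt{\rho},\nonumber
\end{eqnarray}
where the first term is the contraction above and the second uses the model-mismatch bound \eqref{eq:rho}. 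This yields the scalar affine recursion $d(t+1) \le \gamma\, d(t) + \epsilon\sqrt{\rho}$ for $d(t) := \|\bm{z}_{_{\Xi}}(t)-\bm{z}_{_{\Xi}}^*\|_2$. Unrolling this and taking $\limsup$ gives $\limsup_{t\to\infty} d(t) \le \epsilon\sqrt{\rho}/(1-\gamma)$; squaring and rationalizing $(1-\gamma^2) = 2M_{_{\Xi}}\epsilon - L_{_{\Xi}}^2\epsilon^2$ should reproduce the stated steady-state bound \eqref{eq:converge}, namely $\rho/(2M_{_{\Xi}}/\epsilon - L_{_{\Xi}}^2)$.

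The main obstacle is matching the exact constant in \eqref{eq:converge}: a naive triangle-inequality split on the norms produces $\epsilon^2\rho/(1-\gamma)^2$, whereas the claimed bound corresponds to $\epsilon^2\rho/(1-\gamma^2)$. The cleaner route, which I expect is what closes the gap exactly, is to square \emph{before} splitting — expand $\|\bm{z}_{_{\Xi}}(t+1)-\bm{z}_{_{\Xi}}^*\|_2^2$ directly, apply Young's inequality $2ab \le \theta a^2 + \theta^{-1}b^2$ with a parameter $\theta$ tuned to the contraction factor, and derive a recursion of the form $d^2(t+1) \le \gamma'\, d^2(t) + c\,\rho$ whose fixed point is exactly \eqref{eq:converge}. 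I would therefore carry out the squared-norm expansion carefully, choosing the Young's-inequality weight so that the linear-in-$d^2$ coefficient becomes $1 - (2M_{_{\Xi}}\epsilon - L_{_{\Xi}}^2\epsilon^2)$ and the constant term collapses to $\epsilon^2\rho$ divided by the appropriate factor; the $\limsup$ of that recursion is the asserted limit.
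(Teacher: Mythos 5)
Your overall strategy --- treating \eqref{eq:mapk3} as a perturbation of the contraction \eqref{eq:mapk2}, invoking non-expansiveness of the projection, the strong monotonicity and Lipschitz bounds \eqref{eq:multiproperty}, and the mismatch bound \eqref{eq:rho} --- is exactly the paper's. Your first route (triangle inequality on norms) is rigorous and yields $\limsup_{t\to\infty}\|\bm{z}_{_{\Xi}}(t)-\bm{z}_{_{\Xi}}^*\|_2^2 \le \epsilon^2\rho/(1-\gamma)^2$ with $\gamma=\sqrt{1-2M_{_{\Xi}}\epsilon+L_{_{\Xi}}^2\epsilon^2}$; this proves convergence to a bounded neighborhood of the right order, but with a constant larger than \eqref{eq:converge} by the factor $(1+\gamma)/(1-\gamma)$.

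The genuine gap is in your proposed fix. Write $A := \bm{z}_{_{\Xi}}(t)-\epsilon T_{_{\Xi}}(\bm{z}_{_{\Xi}}(t)) - \bm{z}_{_{\Xi}}^* + \epsilon T_{_{\Xi}}(\bm{z}_{_{\Xi}}^*)$ and $B := \epsilon\bigl(T_{_{\Xi}}(\bm{z}_{_{\Xi}}(t)) - \hat{T}_{_{\Xi}}(\bm{z}_{_{\Xi}}(t))\bigr)$, so that $\|A+B\|_2^2 = \|A\|_2^2 + 2A^{\top}B + \|B\|_2^2$ with $\|A\|_2^2\le\gamma^2 d^2(t)$ and $\|B\|_2^2\le\epsilon^2\rho$. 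Young's inequality with weight $\theta>0$ gives $d^2(t+1)\le(1+\theta)\gamma^2 d^2(t)+(1+\theta^{-1})\epsilon^2\rho$, whose fixed point equals $\epsilon^2\rho/(1-\gamma^2)$ only if $(1+\theta^{-1})(1-\gamma^2)=1-(1+\theta)\gamma^2$, i.e., $(1-\gamma^2)/\theta=-\theta\gamma^2$, which has no positive solution: for every admissible $\theta$ the resulting steady-state constant is strictly larger than the one claimed in \eqref{eq:converge}. So the Young's-inequality route cannot close the gap you correctly identified. The paper itself reaches the exact constant by asserting $\|A+B\|_2^2\le\|A\|_2^2+\epsilon^2\rho$, i.e., by silently discarding the cross term $2A^{\top}B$, a step that is valid only when $A^{\top}B\le 0$ and is not justified in general. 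Your suspicion about the constant is therefore well-founded: as stated (and note it is stated as an equality rather than an upper bound), \eqref{eq:converge} is not fully supported by either your argument or the paper's; if a bound of the same order in $\rho$ and $\epsilon$ suffices, your first route already delivers it.
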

\begin{proof}
The distance between $\bm{z}_{_{\Xi}}(t+1)$ and $\bm{z}_{_{\Xi}}^*$ can be calculated as:
\begin{eqnarray}
\hspace{-3mm}&&\|\bm{z}_{_{\Xi}}(t+1)-\bm{z}_{_{\Xi}}^*\|_2^2\nonumber\\
\hspace{-3mm}&\leq& \|\bm{z}_{_{\Xi}}(t)-\epsilon \hat{T}(\bm{z}_{_{\Xi}}(t))-\bm{z}_{_{\Xi}}^*+\epsilon T(\bm{z}_{_{\Xi}}^*)\|_2^2\nonumber\\
\hspace{-3mm}&=& \|\bm{z}_{_{\Xi}}(t)-\epsilon {T}(\bm{z}_{_{\Xi}}(t))+\epsilon {T}(\bm{z}_{_{\Xi}}(t))-\epsilon \hat{T}(\bm{z}_{_{\Xi}}(t))\nonumber\\
\hspace{-3mm}&&\hspace{3mm}-\bm{z}_{_{\Xi}}^*+\epsilon T(\bm{z}_{_{\Xi}}^*)\|_2^2\nonumber
\end{eqnarray}
\begin{eqnarray}
\hspace{-3mm}&\leq& \|\bm{z}_{_{\Xi}}(t)-\epsilon {T}(\bm{z}_{_{\Xi}}(t))-\bm{z}_{_{\Xi}}^*+\epsilon T(\bm{z}_{_{\Xi}}^*)\|_2^2+\epsilon^2\rho\nonumber\\
\hspace{-3mm}&=& \|\bm{z}(t)-\bm{z}^*\|_2^2+\|\epsilon T(\bm{z}(t))-\epsilon T(\bm{z}^*)\|_2^2\nonumber\\
\hspace{-3mm}&&\hspace{3mm}-2\epsilon (\bm{z}(t)-\bm{z}^*)^{\top}(T(\bm{z}(t))- T(\bm{z}^*))+\epsilon^2\rho.\nonumber\\
\hspace{-3mm}&\leq&\Delta\|\bm{z}(t)-\bm{z}^*\|_2^2+\epsilon^2\rho\nonumber\\
\hspace{-3mm}&\leq& \Delta^t\|\bm{z}(1)-\bm{z}^*\|_2^2+\epsilon^2\rho\frac{1-\Delta^t}{1-\Delta}\nonumber
\end{eqnarray}
with $\Delta=1+\epsilon^2L_{_{\Xi}}^2-2\epsilon M_{_{\Xi}}$. In the above, the first inequality comes from non-expansiveness of projection operator, the second from \eqref{eq:rho}, the third from \eqref{eq:multiproperty1}--\eqref{eq:multiproperty2}, and the last inequality is obtained by recursively executing previous steps. Then based on \eqref{eq:multiproperty3} and condition~\eqref{eq:stepsize} one has 
 $0<\Delta\leq\bar{\Delta} <1$ for some $\bar{\Delta}$ such that when $t\rightarrow\infty$, \eqref{eq:converge} follows.
\end{proof}

Theorem~\ref{the:nonlinear} indicates that, despite the discrepancy introduced by linear and nonlinear power flows, dynamics~\eqref{eq:mapk3} converges to the saddle point of $\cL^{\Phi}_{\eta}$ with bounded distance $\frac{\rho}{2M_{_{\Xi}}/\epsilon -L_{_{\Xi}}^2}$. Therefore, given any accuracy requirement $\delta>0$, if stepsize is chosen such that $\epsilon<\min\{\frac{2M_{_{\Xi}}\delta}{\rho+L^2_{_{\Xi}}\delta},\frac{2M_{_{\Xi}}}{L^2_{_{\Xi}}}\}$, we have 
\begin{eqnarray}
\lim_{t\rightarrow\infty}\sup\|\bm{z}_{_{\Xi}}(t)-\bm{z}_{_{\Xi}}^*\|_2^2<\delta.
\end{eqnarray}

\section{Numerical Results}\label{sec:numerical}

A three-phase unbalanced, 11,000-node test feeder is constructed by connecting an IEEE 8,500-node test feeder and a modified EPRI Ckt7 test feeder at the substation. Fig.~\ref{fig:testfeeder} shows the single-line diagram of the feeder with its line width proportional to the nominal power flow on it. The primary side of the feeder is modeled in detail, while the loads on the secondary side are lumped into corresponding distribution transformers, resulting in a 4,521-node network with 1,043 controllable (aggregated) loads. 
We group all the nodes into unclustered nodes and four subtrees marked in Fig.~\ref{fig:testfeeder}. Subtrees 1--4 contain 357, 222, 310, and 154 nodes with controllable loads, respectively. We fix the loads on all 292 unclustered nodes for simplicity.

The three-phase unbalanced nonlinear power flow model is simulated in OpenDSS. With default control of capacitors and regulators in OpenDSS \cite{dugan2010ieee}, one achieves the voltage profile shown in Fig.~\ref{fig:Voltage_controlled} with orange dots, where under-voltages are observed.
We next disable the control of all the capacitors and regulators to obtain the heavily under-voltage scenario marked with blue dots in Fig.~\ref{fig:Voltage_controlled}. We implement Algorithm~\ref{alg:distalg2} with this scenario as the initial condition.

The simulation is conducted on a laptop with Intel Core i7-7600U CPU @ 2.80GHz 2.90GHz, 8.00GB RAM, running Python 3.6 on Windows 10 Enterprise Version.

\subsection{Numerical Performance Evaluation}
For each controllable node $i$ of phase $\phi$, we consider minimizing the cost of its deviation from its nominal (most preferred) load level $(p_i^{\phi}(0), q_i^{\phi}(0))$, i.e., $C^{\phi}_i(p^{\phi}_i,q^{\phi}_i)=(p^{\phi}_i-p_i^{\phi}(0))^2+(q^{\phi}_i-q_i^{\phi}(0))^2$. We focus on voltage regulation here. So, we set $C_0(P_0)$ to $0.0005(P_0-\tilde{P}_0)^2$ with a small weight and $\tilde{P}_0=P_0(0)$. $\underline{v}_i$ and $\overline{v}_i$ are uniformly set to $0.95$~p.u. and $1.05$~p.u., respectively. We implement Algorithm~\ref{alg:distalg2} with a constant stepsize $3.5\times 10^{-4}$ for the primal update, and $3.5\times 10^{-3}$ for the dual.

 \begin{figure}
     \centering
     \includegraphics[scale=0.29]{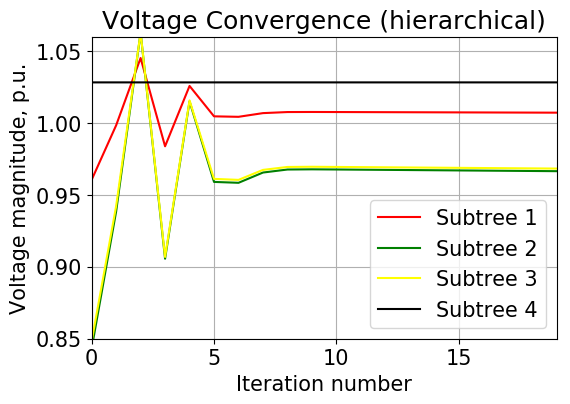}
     \includegraphics[scale=0.29]{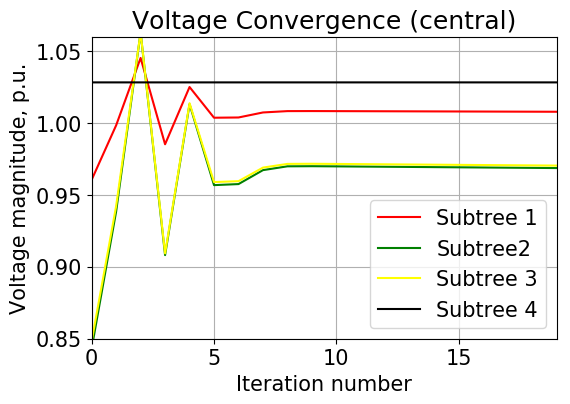}
     \caption{The hierarchical distributed algorithm (left) and the centrally coordinated algorithm (right) show identical convergence dynamics under the same setup.}\label{fig:Voltage_converge}
 \end{figure}
 
\subsubsection{Convergence and Computational Speed Improvement}
It takes about 1730 iterations to reach 1\% of the optimal value and 3000 iterations to reach the optimal; see the red curve in Fig.~\ref{fig:Voltage_controlled} (right). This result is also comparable to those in literature, e.g., \cite{peng2018distributed}.

The convergence dynamics of the centrally coordinated algorithm is identical to the hierarchical distributed implementation as sampled and illustrated in Fig.~\ref{fig:Voltage_converge}. However, due to the overall complexity reduction, the computational time per iteration is reduced from more than 4s to 1s. Meanwhile, if we consider parallel computation\footnote{We imitate  the performance of parallel computation on one PC by timing the computation for CC and each RC. If the algorithm is to be implemented on multiple devices, we also need consider communication delay which is ignored here.}  and takes the slowest cluster to estimate the overall time consumed, each iteration only takes 0.37s. Therefore, the overall computational speed improvement is more than 10 folds without compromising any accuracy of the OPF solution.
Noticing that the computational time at each cluster is approximately proportional to the square of node number, we  expect faster performance if more clusters, each with a smaller number of nodes, are divided. 
 \begin{figure}
	\centering
	\includegraphics[scale=0.5]{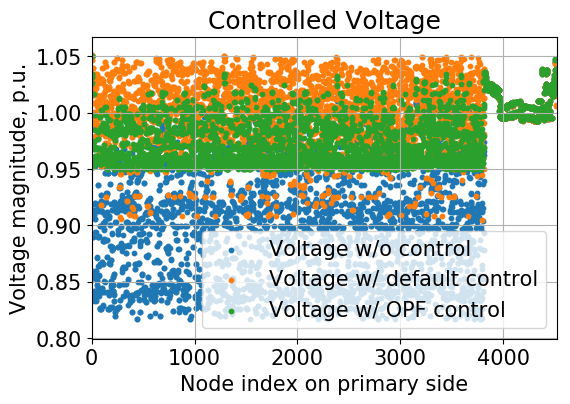}
	\caption{Voltages are strictly controlled within $[0.95,1.05]$ p.u. by the hierarchical distributed algorithm.}\label{fig:Voltage_controlled}
\end{figure}
 \begin{figure}
	\centering
	\includegraphics[scale=0.5]{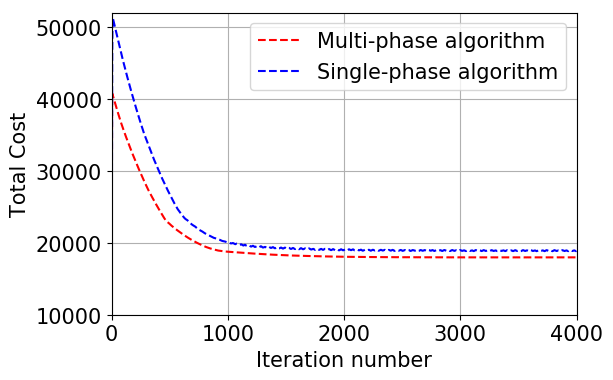}
	\caption{The minimal costs achieved by multi-phase algorithm and single-phase algorithm.}\label{fig:totalcost}
\end{figure}
\subsubsection{Voltage Regulation} We plot the regulated voltages obtained by Algorithm~\ref{alg:distalg2} with green dots in Fig.~\ref{fig:Voltage_controlled}. Note that the voltage magnitudes of all nodes are \textit{strictly} constrained within the $[0.95,1.05]$ p.u. bound. In contrast, the default control of regulators and capacitors cannot guarantee that all the voltages are within this bound.

\subsubsection{Comparison with Single-Phase Algorithm}
We apply the single-phase algorithm with the same setup for comparison. The detailed results of the single-phase algorithm are referred to \cite{zhou2018hierarchical}. The results show that it takes multi-phase algorithm more time to execute each iteration (1s v.s 0.45s), which is expected since the multi-phase algorithm executes more computation by considering inter-phase sensitivity.
On the other hand, thanks to more accurate linearization, the minimal cost obtained by the multi-phase algorithm is 17971, 4.6\% smaller than the value of 18840 obtained by the single-phase algorithm in \cite{zhou2018hierarchical}, as illustrated in Fig.~\ref{fig:totalcost}. This result also echoes with Eq.~\eqref{eq:converge}, which indicates that accuracy of linearization model $\rho$ affects algorithm performance.

\section{Conclusion}\label{sec:conclusion}
We proposed a hierarchical distributed implementation of the primal-dual gradient algorithm to solve an OPF problem. The objective of OPF is to minimize the total cost over all the controllable DERs and a cost associated with the total network load, subject to voltage regulation constraints. By utilizing the information structure of tree/subtrees to reduce and distribute computational loads the proposed implementation is scalable to large multi-phase distribution networks. Performance of our design is analytically characterized and numerically corroborated. The significant improvement in convergence speed shows the great potential of the proposed method for grid optimization and control in real time.

\section*{Acknowledgments}
This work was authored in part by the National Renewable Energy Laboratory, operated by Alliance for Sustainable Energy, LLC, for the U.S. Department of Energy (DOE) under Contract No. DE-EE-0007998. Funding provided by U.S. Department of Energy Office of Energy Efficiency and Renewable Energy Solar Energy Technologies Office. The views expressed in the article do not necessarily represent the views of the DOE or the U.S. Government. The U.S. Government retains and the publisher, by accepting the article for publication, acknowledges that the U.S. Government retains a nonexclusive, paid-up, irrevocable, worldwide license to publish or reproduce the published form of this work, or allow others to do so, for U.S. Government purposes.

\bibliographystyle{IEEEtran}
\bibliography{biblio.bib}

\appendix

\subsection{Proof of Lemma~\ref{lem:mono}}
\begin{proof}
	Denote by $f(\bm{y})=\sum_{i\in\cN} C_i(p_i,q_i)+C_0(P_0(\bm{p}))$ and $\bm{\mu}^{\top}\bm{g}(\bm{y})=\underline{\bm{\mu}}^{\top}(\underline{\bm{v}}-\bm{v}(\bm{p},\bm{q}))+\overline{\bm{\mu}}^{\top}(\bm{v}(\bm{p},\bm{q})-\overline{\bm{v}})$ for simplicity. 
	Then $T(\bm{z})$ can be equivalently decomposed into the following operators:
	\begin{eqnarray}
	T(\bm{z})&\hspace{-3mm}=&\hspace{-3mm}\begin{bmatrix}\nabla_{\bm{y}} f(\bm{y}) \\ \nabla_{\bm{\mu}}\frac{\phi}{2}\|\bm{\mu}\|_2^2\end{bmatrix}+\begin{bmatrix}\nabla_{\bm{y}} \bm{\mu}^{\top}\bm{g}(\bm{y}) \\ -\nabla_{\bm{\mu}}\bm{\mu}^{\top}\bm{g}(\bm{y})\end{bmatrix}\nonumber\\
	&\hspace{-3mm}=&\hspace{-3mm}\begin{bmatrix}\nabla_{\bm{y}} f(\bm{y}) \\ \nabla_{\bm{\mu}}\frac{\phi}{2}\|\bm{\mu}\|_2^2\end{bmatrix}+\begin{bmatrix} 0 & 0 & -R^{\top} & R^{\top}\\ 0 & 0 & -X^{\top} & X^{\top}\\ R & X & 0 & 0 \\ -R & -X & 0 & 0 \end{bmatrix}\begin{bmatrix}\bm{p}\\\bm{q}\\\underline{\bm{\mu}}\\\overline{\bm{\mu}}\end{bmatrix}\nonumber\\
	&&+\text{Constant}.\nonumber
	\end{eqnarray}
	
	We can verify that the first operator $\begin{bmatrix}\nabla_{\bm{y}} f(\bm{y}) \\ \nabla_{\bm{\mu}}\frac{\phi}{2}\|\bm{\mu}\|_2^2\end{bmatrix}$ is strongly monotone since $f(\bm{y})$ and $\frac{\phi}{2}\|\bm{\mu}\|_2^2$ are strongly convex in $\bm{y}$ and $\bm{\mu}$, respectively. The second (linear) operator is monotone since
	\begin{eqnarray}
	\begin{bmatrix*}[c] 0  &\hspace{-2mm} 0 &\hspace{-2mm} -R^{\top} &\hspace{-2mm} R^{\top}\\ 0 &\hspace{-2mm} 0 & \hspace{-2mm} -X^{\top} &\hspace{-2mm} X^{\top}\\ R &\hspace{-2mm} X &\hspace{-2mm} 0 &\hspace{-2mm} 0 \\ -R &\hspace{-2mm} -X & \hspace{-2mm}0 &\hspace{-2mm} 0 \end{bmatrix*}+\begin{bmatrix*}[c] 0 & \hspace{-2mm}0 & \hspace{-2mm}-R^{\top} &\hspace{-2mm} R^{\top}\\ 0 &\hspace{-2mm} 0 & \hspace{-2mm}-X^{\top} & \hspace{-2mm}X^{\top}\\ R & \hspace{-2mm}X & \hspace{-2mm}0 & \hspace{-2mm}0 \\ -R & \hspace{-2mm}-X &\hspace{-2mm} 0 &\hspace{-2mm} 0 \end{bmatrix*}^{\top}\hspace{-1mm} \succeq 0. \nonumber
	\end{eqnarray}
	
	Therefore, $T(\bm{z})$ is a strongly monotone operator as the result of combining a strongly monotone operator and a monotone operator.
\end{proof}

\subsection{Proof of Theorem~\ref{the:converge}}
\begin{proof}
	We have
	\begin{eqnarray}
	\hspace{-3mm}&&\|\bm{z}(t+1)-\bm{z}^*\|_2^2\nonumber\\
	\hspace{-3mm}&\leq& \|\bm{z}(t)-\epsilon T(\bm{z}(t))-\bm{z}^*+\epsilon T(\bm{z}^*)\|_2^2\nonumber\\
	\hspace{-3mm}&=& \|\bm{z}(t)-\bm{z}^*\|_2^2+\|\epsilon T(\bm{z}(t))-\epsilon T(\bm{z}^*)\|_2^2\nonumber\\
	\hspace{-3mm}&&\hspace{3mm}-2\epsilon (\bm{z}(t)-\bm{z}^*)^{\top}(T(\bm{z}(t))- T(\bm{z}^*))\nonumber\\
	\hspace{-3mm}&\leq&(1+\epsilon^2L^2-2\epsilon M)\|\bm{z}(t)-\bm{z}^*\|_2^2\nonumber
	\end{eqnarray}
	where the first inequality comes from non-expansiveness of projection operator, and the second from \eqref{eq:strongmono} and \eqref{eq:lipschitz}. Then by Lemma~\ref{lem:sigmarelation} and condition \eqref{eq:converge0}, we have $0\leq 1+\epsilon^2L^2-2\epsilon M< 1$, i.e., $\|\bm{z}(t+1)-\bm{z}^*\|_2^2\leq \Delta \|\bm{z}(t)-\bm{z}^*\|_2^2$ for some constant $0<\Delta <1$.
	
	Therefore, dynamics \eqref{eq:mapk} converges to the unique saddle point of \eqref{eq:langr} exponentially fast.
\end{proof}

\end{document}